\documentclass[11pt,a4paper,reqno]{amsart}

\usepackage{amsmath}
\usepackage{amsfonts}
\usepackage{graphicx}
\usepackage{framed}
\usepackage{amssymb,cancel}
\usepackage[margin=2.7cm]{geometry}
\usepackage{xcolor}
\usepackage{esint}
\usepackage{comment}

\usepackage{etex}
\usepackage{latexsym}
\usepackage[english]{babel}
\usepackage[latin1]{inputenc}
\usepackage[colorlinks=true,urlcolor=blue,
citecolor=red,linkcolor=blue,linktocpage,pdfpagelabels,
bookmarksnumbered,bookmarksopen]{hyperref}
\usepackage[hyperpageref]{backref}

\def \RN {\mathbb{R}^N}

\def \R {\mathbb{R}}

\def \e {\varepsilon}

\def \LL {\mathcal{L}}

\usepackage{amsthm}
\theoremstyle{definition}
\newtheorem{definition}{Definition}[section]

\newtheorem{remark}[definition]{Remark}

\theoremstyle{plain}
\newtheorem{theorem}[definition]{Theorem}
\newtheorem{proposition}[definition]{Proposition}
\newtheorem{lemma}[definition]{Lemma}

\numberwithin{equation}{section}

\makeatletter
\@namedef{subjclassname@2020}{\textup{2020} Mathematics Subject Classification}
\makeatother

\begin{document}

 \title[Local-Nonlocal Fujita type results]
 {Blow-up of solutions to  semilinear parabolic equations driven by mixed local-nonlocal operators \\ with large initial data}

 \author[S.\,Biagi]{Stefano Biagi}
 \author[F.\,Punzo]{Fabio Punzo}
 \author[E.\,Vecchi]{Eugenio Vecchi}

 \address[S.\,Biagi]{Dipartimento di Matematica
 \newline\indent Politecnico di Milano \newline\indent
 Via Bonardi 9, 20133 Milano, Italy}
 \email{stefano.biagi@polimi.it}

 \address[F.\,Punzo]{Dipartimento di Matematica
 \newline\indent Politecnico di Milano \newline\indent
 Via Bonardi 9, 20133 Milano, Italy}
 \email{fabio.punzo@polimi.it}

 \address[E.\,Vecchi]{Dipartimento di Matematica
 \newline\indent Università di Bologna \newline\indent
 Piazza di Porta San Donato 5, 40126 Bologna, Italy}
 \email{eugenio.vecchi2@unibo.it}

\keywords{Mixed local/nonlocal operator, semilinear parabolic equations, heat kernel estimates, sub--supersolutions.}

\subjclass[2020]{35A01, 35B44, 35K57, 35K58, 35R11}

\thanks{The Authors are
member of the {\em Gruppo Nazionale per
l'Analisi Ma\-te\-ma\-ti\-ca, la Probabilit\`a e le loro Applicazioni}
(GNAMPA) of the {\em Istituto Nazionale di Alta Matematica} (INdAM). S.B. and E.V. are partially supported by the
Indam-GNAMPA project {\em Esistenza, regolarità e studio delle proprietà qualitative per problemi nonlineari}.\\}

 \begin{abstract}
We investigate finite-time blow-up for nonnegative solutions to the
Cauchy problem associated with semilinear parabolic equations driven
by a mixed local--nonlocal operator. The reaction term is assumed to
satisfy suitable structural hypotheses, the prototype being
$f(u)=u^p$ with $p>1$. By adapting the Kaplan method to the present
framework, we prove that solutions blow up in finite time whenever
the initial datum is sufficiently large. In the prototype case
$f(u)=u^p$, this conclusion holds for every $p>1$. As a particular case of our operator, we also include the fractional
Laplacian; to the best of our knowledge, this type of result is new
even in that special case.
 \end{abstract}

 \maketitle

\section{Introduction}\label{sec.Intro}
In this article, we continue the study initiated in \cite{BPV},
concerning the finite-time blow-up of nonnegative solutions to the
following Cauchy problem:
\begin{equation}\tag{$\mathrm{CP}_{\LL}$}\label{eq:mainPbParabolicIntro}
	\begin{cases}
	\partial_t u+\mathcal{L}	u = f(u) & \text{in $\mathbb{R}^N\times(0,+\infty)$} \\
	u(\cdot,0) = u_0 & \text{in $\mathbb{R}^N$},
	\end{cases}
\end{equation}
where $\mathbf{a} \geq 0$ and $\mathbf{b}>0$ are fixed parameters, $p\in(1,+\infty)$, 
$\mathcal{L}$ is the (weighted) mixed local-nonlocal operator defined as
$$\mathcal{L} = -\mathbf{a}\Delta + \mathbf{b}(-\Delta)^s.$$
Furthermore, as for the \emph{initial datum $u_0$}, we always assume that it is continuous, bounded, and non-negative on $\mathbb{R}^N$. Concerning the \emph{reaction term $f$},
we make the following structural assumption:
\begin{itemize}
	\item[$(H)_f$] $f:[0,+\infty)\to[0,+\infty)$ is a locally Lipschitz
	and
	convex function such that
	\begin{align*}
	\mathrm{i)}\,\,&\text{$f(0) = 0$ and $f(z) > 0$ for all $z > 0$}; \\
	\mathrm{ii)}\,\,& \exists\lim_{z\to+\infty}\frac{f(z)}{z} = +\infty\\
	\mathrm{iii)}\,\,&\int_1^{+\infty}\frac{1}{f(z)}\,dz < +\infty	
	\end{align*}
\end{itemize}
Clearly, assumption $(H)_f$ is satisfied when $f(u)=u^p$ with 
$p>1$.


\medskip

The study of finite-time blow-up and global existence of solutions to
the problem \eqref{eq:mainPbParabolicIntro} with $\mathbf{a}=1$ and $\mathbf{b}=0$ has been widely developed in the literature, specially whenever $f(u)=u^p, p>1$.
We refer to \cite{BPV} for a detailed account of the various available
results. Here, we restrict ourselves to recalling the early results
in \cite{F, H, KST}, as well as the survey papers
 \cite{BB, DL, deP, Levine, Mitidieri2, QS}, which also contain analogous
results for nonlinear problems.  On the other hand, the case $\mathbf{a}=0$ and $\mathbf{b}=1$ was treated
in \cite{Sug, LaisterS}.  Moreover, for $p>1+\frac{2s}{N}$, global-in-time solutions were
investigated in \cite{IKK} (see also \cite{HKN}), where the
asymptotic behavior of solutions as $t\to+\infty$ was also studied.

In recent years, mixed operators of the form $\mathcal L$ have
emerged as a significant object of study in the analysis of partial
differential equations. A growing literature has been devoted to the
qualitative properties of solutions to equations driven by such
operators, especially in the elliptic setting, but also in the
parabolic one; see, for instance, \cite{Biagi2}-\cite{BonfII}.

Part of this interest stems from the probabilistic interpretation of
these operators. Indeed, they naturally arise in connection with
stochastic dynamics combining different mechanisms, such as standard
diffusion and jump processes of L\'evy type. In addition, mixed
operators have proved useful in the modeling of several phenomena from
the applied sciences. A notable example is provided by models for
optimal animal foraging strategies; see, e.g., \cite{DPLV23, DV21} and the
references therein.

As for problem \eqref{eq:mainPbParabolicIntro} in the general case of a mixed local--nonlocal
operator with $f(u)=u^p$, it was shown in \cite[Theorem 3.3]{BPV} (see also \cite{DPF, KT}) that,
if $p \le 1+\frac{2s}{N}$, then every solution corresponding to any
nontrivial initial datum $u_0 \not\equiv 0$ blows up in finite time.
On the other hand, if $p > 1+\frac{2s}{N}$ and $u_0$ is sufficiently
small, then there exists a global-in-time solution.

We now focus on the case in which $p>1$ is arbitrary, while the
initial datum $u_0$ is sufficiently large. It is well known that, in
the case $\mathbf{a}=1$ and $\mathbf{b}=0$ (see, e.g., \cite{BL,deP}), if $u_0$ is
big enough, then the corresponding solution blows up in finite for every $p>1$.

Some related results in this direction were obtained
in \cite[Theorem 1.2]{DPF} for problem \eqref{eq:mainPbParabolicIntro} posed in a bounded domain,
by different methods relying heavily on the assumption
that $\Omega$ is bounded.

\medskip

In this paper, we show that, for problem
\eqref{eq:mainPbParabolicIntro}, finite-time blow-up occurs provided
that the initial datum is sufficiently large. Note that the assumption on the initial datum will be stated explicitly. In particular, when
$f(u)=u^p$, our result applies to every $p>1$. We emphasize that, to the best of our knowledge, the result
established here is new even in the special case $\mathbf{a}=0$, namely for
the fractional operator.

To obtain our result, we adapt to the present setting the
so-called Kaplan method (see \cite{Kaplan}), which was used, for instance,
in \cite{BL,deP} in the case $\mathbf{a}=1$, $\mathbf{b}=0$, and in \cite{PZ1} for the
semilinear heat equation posed on an infinite combinatorial graph.

We first derive a general condition ensuring finite-time blow-up of
the solution whenever $u_0$ is sufficiently large; see
Theorem \ref{thm:MainNonExistenceKaplan}. The proof of this theorem
requires particular care because of the nonlocal part of the
operator. The result relies on the existence of a special function $\kappa$,
which we call a {\it Kaplan function}. Besides satisfying suitable
regularity and integrability properties, $\kappa$  is a positive
subsolution of equation
\[
-\mathcal L \kappa + \lambda \kappa = 0
\qquad \text{pointwise in } \mathbb{R}^N,
\]
for some $\lambda>0$.

Given a solution $u(t)$ of problem \eqref{eq:mainPbParabolicIntro}, we define the function
$\Phi \colon [0,+\infty)\to[0,+\infty)$ by
\[
\Phi(t):=\int_{\mathbb{R}^N}\kappa u(t)\,dx.
\]
We then show that
\[
\Phi'(t)+\lambda\Phi(t)\ge f(\Phi(t))
\qquad \text{for all } t>0.
\]
If the initial datum $u_0$ is sufficiently large, then this property
is inherited by $\Phi(0)$. A suitable analysis of the above ordinary
differential inequality with initial condition $\Phi(0)$ then yields finite-time blow-up.

A further crucial step is the explicit construction of the
subsolution $\kappa$ (see Theorem \ref{thm:ExplicitKaplan}), which is carried out ad hoc by treating the
fractional part of the operator with special care, since explicit
computations are less straightforward in the nonlocal setting. By combining the general criterion with the construction of the
function $\kappa$, we deduce our main result, namely
Theorem \ref{thm:NonexistenceExplicit}. In this connection, let us point out that, within our framework
$\mathbf{a}\geq 0$ and $\mathbf{b}>0$, the Kaplan function is of the form
\[
\kappa(x):=C\varepsilon^{\frac N2}
\bigl(1+\varepsilon |x|^2\bigr)^{-\beta},
\qquad x\in\mathbb R^N,
\]
for some constants $C>0$, $\varepsilon>0$ and $\beta>0$.

By contrast, in the case $\mathbf{a}=1$ and $\mathbf{b}=0$,
which does not fall within the scope of our analysis, it is well known
(see \cite{BL,deP}) that the Kaplan function is given by
\[
\kappa(x):=Ck^{\frac N2}e^{-k|x|^2},
\qquad x\in\mathbb R^N,
\]
for some constants $C>0$ and $k>0$.

This difference in the form of $\kappa$ is due to the presence or
absence of the fractional part in the operator $\mathcal L$.

In addition, using Theorem \ref{thm:NonexistenceExplicit}, in which $u_0$ is assumed to be
sufficiently large and $p>1$, we deduce a blow-up result under the
assumptions that $u_0$ is arbitrary, as long as it is nontrivial, and
that $f(u)=u^p$ with $p<1+\frac{2s}{N}$. In this way, we recover a result that, as
mentioned above, had already been established in \cite{BPV}.

\medskip
\bigskip

\noindent -\,\,\emph{Plan of the paper}. In Section \ref{sec.Prel}, we fix the notation, recall the main properties of the
operator $\mathcal L$ and of the associated heat kernel, and give the
definition of solution.

In Section \ref{sec:MainThm}, we state and prove the main results. We first introduce
the notion of Kaplan function and establish a general criterion for
the nonexistence of global solutions; see
Theorem \ref{thm:MainNonExistenceKaplan}. We next construct the Kaplan function, as stated in
Theorem \ref{thm:ExplicitKaplan}, and then use it to state and prove
the main theorem, namely
Theorem \ref{thm:MainNonExistenceKaplan}. Finally, we discuss (see Theorem \ref{teo3}) the application of the
previous results to the special case $f(u)=u^p$ with
$p<1+\frac{2s}{N}$, for every nontrivial initial datum.

\bigskip
\medskip

\noindent-\,\,\textbf{Conflict of interest.} The author states no conflict of interest.

\noindent-\,\,\textbf{Data availability statement.} There are no data associated with this research.

\section{Relevant notation and preliminaries}\label{sec.Prel}

In this section we fix the notation that will be used throughout the paper and introduce the main definitions related to the mixed operator
$$\mathcal{L} = -\mathbf{a}\Delta + \mathbf{b}(-\Delta)^s$$
(and the associated Cauchy problem \eqref{eq:mainPbParabolicIntro}).

\subsection{Notation.}

Throughout the paper we shall use the notation listed below; we therefore refer the reader to this list whenever non-standard notation is encountered.

\begin{itemize}
 \item We denote by $\R^+$ (resp.\,$\R^+_0$) the interval $(0,+\infty)$ (resp.\,$[0,+\infty)$).
 \vspace*{0.1cm}

 \item Given $x_0\in\R^N$ and $r > 0$, we denote by $B_r(x_0)$ the open Euclidean ball centered at $x_0$ with radius $r$; in the special case $x_0 = 0$, we simply write $B_r$.
 \vspace*{0.1cm}

 \item Given two vectors $v_1,v_2 \in \mathbb{R}^N$, we denote by $\langle v_1,v_2\rangle$ their scalar product.
  \vspace*{0.1cm}
	
 \item Given $0<T\leq+\infty$, we denote by $S_T$ the (possibly infinite) strip $\R^N\times (0,T)$; in the case $T = +\infty$, we simply write $S$ in place of $S_{+\infty}$.
 \vspace*{0.1cm}

 \item If $A$ is a subset of some Euclidean space $\R^m$ (with $m\geq 1$), we denote by $\mathbf{1}_A$ the indicator function of $A$, namely
 $$\mathbf{1}_A(z) =
 \begin{cases}
 1 & \text{if $z\in A$}, \\
 0 & \text{if $z\notin A$}.
 \end{cases}
 $$
 \vspace*{0.1cm}

 \item Given $s\in (0,1)$, we denote by $\mathbb{L}^s(\mathbb{R}^N)$ the \emph{tail space}
 \begin{equation} \label{eq:spaceL1s}
 \mathbb{L}^s(\R^N) :=
 \Big\{u:\R^N\to\R:\ \|u\|_{1,s} := \int_{\R^N}\frac{|u(x)|}{1+|x|^{N+2s}}\,dx<+\infty\Big\}.
 \end{equation}

 \item Let $A$ be a subset of some Euclidean space $\R^m$ (with $m\geq 1$). We set
 \begin{equation} \label{eq:spaceCpositive}
 C_+(A) = \big\{f:A\to\mathbb{R}:\ f\in C(A)\ \text{and}\ f\geq 0\ \text{on } A\big\}.	
 \end{equation}

 \item Let $\Omega\subseteq\mathbb{R}^N$ be an open set and $I\subseteq\mathbb{R}$ an open interval. We set
 \begin{equation} \label{eq:C21xtDef}
 C^{2,1}_{x,t}(\Omega\times I) = \Big\{u:\Omega\times I\to\mathbb{R}:\
 \begin{array}{l}
 u(\cdot,t)\in C^2(\Omega)\ \text{for every } t\in I,\\
 u(x,\cdot)\in C^1(I)\ \text{for every } x\in\Omega
 \end{array}
 \Big\}.
 \end{equation}

 \item We denote by $\Gamma$ the Euler Gamma function, that is, the unique analytic extension to open set $\mathcal{D} = \mathbb{C}\setminus\{k\in\mathbb{Z}:\,k\leq 0\}$ of the map
 $$\{z\in\mathbb{C}:\ \mathrm{Re}(z)>0\}\ni z \mapsto \int_0^{+\infty}t^{z-1}e^{-t}\,dt.$$
\end{itemize}

\subsection{Pointwise theory for the Cauchy problem \eqref{eq:mainPbParabolicIntro}}
Having fixed the notation, we briefly discuss the \emph{classical theory} for the Cauchy problem \eqref{eq:mainPbParabolicIntro} associated with $\mathcal{L}$.
\medskip

\noindent\textbf{1) The fractional Laplacian.}
Let $s\in (0,1)$ be fixed and let $u:\R^N\to\R$. The \emph{fractional Laplacian} of order $s$ of $u$ at a point $x\in\R^N$ is defined by
\begin{equation} \label{eq:defDeltas}
\begin{split}
 (-\Delta)^s u(x) & = C_{N,s}\,\mathrm{P.V.}\int_{\R^N}\frac{u(x)-u(y)}{|x-y|^{N+2s}}\,dy \\
& = C_{N,s}\,\lim_{\varepsilon\to 0^+}\int_{\{|x-y|\geq\varepsilon\}}\frac{u(x)-u(y)}{|x-y|^{N+2s}}\,dy,
\end{split}
\end{equation}
whenever the limit exists and is finite.

Here $C_{N,s} > 0$ is a normalization constant, explicitly given by
$$C_{N,s} = \frac{2^{2s}\Gamma((N+2s)/2)}{\pi^{N/2}|\Gamma(-s)|}.$$
In order for $(-\Delta)^s u(x)$ to be well defined, suitable growth conditions on $u$ are required, both as $y\to\infty$ and as $y\to x$. In this direction we recall the following result (see \cite{KKP,Silv0}).

\begin{proposition} \label{prop:welldefDeltas}
Let $\varnothing\neq\Omega\subseteq\R^N$ be an open set and let $\mathbb{L}^s(\mathbb{R}^N)$ be defined as in \eqref{eq:spaceL1s}.

Then, the following assertions hold.
\begin{itemize}
 \item[{i)}] If $0<s<1/2$ and $u\in C_{\mathrm{loc}}^{2s+\gamma}(\Omega)\cap \mathbb{L}^s(\R^N)$ for some $\gamma \in (0,1-2s)$, then
 $$
 (-\Delta)^s u(x) = C_{N,s}\int_{\R^N}\frac{u(x)-u(y)}{|x-y|^{N+2s}}\,dy\qquad\forall\,\,x\in\Omega.
 $$

 \item[{ii)}] If $1/2<s<1$ and $u\in C^{1,2s-1+\gamma}_{\mathrm{loc}}(\Omega)\cap \mathbb{L}^s(\R^N)$ for some $\gamma\in (0,2-2s)$, then
 $$
 (-\Delta)^s u(x) = -\frac{C_{N,s}}{2}\int_{\R^N}\frac{u(x+z)+u(x-z)-2u(x)}{|z|^{N+2s}}\,dz\qquad\forall\,\,x\in\Omega.
 $$

\end{itemize}
Moreover, in both cases $(-\Delta)^s u\in C(\Omega)$.
\end{proposition}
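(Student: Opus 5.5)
The plan is to prove the two assertions separately, using the same scheme in both. First split the singular integral into a near part and a far part: fix $x\in\Omega$ and $r>0$ with $\overline{B_{2r}(x)}\subset\Omega$. The far part $\int_{\{|x-y|\geq r\}}\frac{u(x)-u(y)}{|x-y|^{N+2s}}\,dy$ is absolutely convergent, with no principal value needed. Indeed, $|u(x)|\int_{\{|x-y|\ge r\}}|x-y|^{-N-2s}\,dy<\infty$, and for $|x-y|\geq r$ we have $|x-y|^{N+2s}\geq c_{r,x}(1+|y|^{N+2s})$. This comparison holds because the ratio $(1+|y|^{N+2s})/|x-y|^{N+2s}$ is bounded on $\{|x-y|\ge r\}$ (it tends to $1$ at infinity). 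Hence the $u(y)$ term is controlled by $\|u\|_{1,s}$. All the work is therefore in the near part on $B_r(x)$.

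\emph{Case i)} ($0<s<1/2$). Here $u\in C^{2s+\gamma}$ near $x$, with $2s+\gamma<1$, so $|u(x)-u(y)|\le [u]_{C^{2s+\gamma}(B_{2r}(x))}|x-y|^{2s+\gamma}$. The integrand on $B_r(x)$ is then bounded by $C|x-y|^{-N+\gamma}$, which is integrable. By dominated convergence, the limit $\varepsilon\to0^+$ in \eqref{eq:defDeltas} equals the Lebesgue integral over $\R^N$, which gives the formula.

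\emph{Case ii)} ($1/2<s<1$). The first-order term $\langle\nabla u(x),y-x\rangle$ is not absolutely integrable against the kernel, so we symmetrize. On each annulus $\{\varepsilon\le|x-y|\}$, change variables $y=x\pm z$ and average the two expressions. This gives
$$\int_{\{|z|\ge\varepsilon\}}\frac{u(x)-u(x+z)}{|z|^{N+2s}}dz=-\frac12\int_{\{|z|\ge\varepsilon\}}\frac{u(x+z)+u(x-z)-2u(x)}{|z|^{N+2s}}dz.$$
For $|z|<r$, the mean value theorem together with the $C^{1,2s-1+\gamma}$ bound on $\nabla u$ yields
$$|u(x+z)+u(x-z)-2u(x)|=\Big|\int_0^1\langle\nabla u(x+\tau z)-\nabla u(x-\tau z),z\rangle d\tau\Big|\le C|z|^{2s+\gamma}.$$
So the symmetrized integrand is $O(|z|^{-N+\gamma})$ near $0$, and it is integrable at infinity because $u\in\mathbb{L}^s$. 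Dominated convergence again lets us pass to the limit.

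\emph{Continuity of $(-\Delta)^s u$ on $\Omega$.} Take $x_n\to x$ with all $x_n\in B_{r}(x)$. Apply dominated convergence to the representation valid at $x_n$, split again at radius $r$.
\begin{itemize}
\item On $\{|z|<r\}$, the Hölder bounds hold uniformly in $x_n$ because the seminorms are taken on the fixed compact set $\overline{B_{2r}(x)}$. This gives the uniform majorant $C|z|^{-N+\gamma}$, and pointwise convergence follows from continuity of $u$.
\item On the far region, write the $u(y)$-term as $\int u(y)K(x_n-y)\mathbf 1_{\{|x_n-y|\ge r\}}dy$. A uniform majorant is $C|u(y)|/(1+|y|^{N+2s})$, which is valid for all $n$ since $|x_n-x|<r$. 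The terms involving $u(x_n)$ converge trivially.
\end{itemize}
The only almost-everywhere issue is the boundary sphere $|x_n-y|=r$, which has measure zero, so pointwise convergence holds a.e.

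\emph{Main obstacle.} I expect the main delicate point to be keeping this dominating function uniform near the sphere $\{|x_n-y|=r\}$ as the center moves. To handle it cleanly, I would first try a smooth cutoff $\chi(|z|)$ in place of the sharp splitting, so that both pieces vary continuously in $x$.
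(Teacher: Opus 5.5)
The paper does not prove this proposition; it is recalled from the literature (the sources cited just before the statement), so there is no in-paper argument to compare against. Your proof is correct and self-contained, and it is the standard argument.
\begin{enumerate}
\item You split at a radius $r$ with $\overline{B_{2r}(x)}\subset\Omega$.
\item Near the origin, the H\"older bound gives the integrable majorant $C|z|^{\gamma-N}$; for $s>1/2$ this comes after symmetrizing and using the fundamental theorem of calculus on the segment $[x-z,x+z]$.
\item Membership in $\mathbb{L}^s(\mathbb{R}^N)$ controls the far region.
\item Dominated convergence then gives both the existence of the principal value in \eqref{eq:defDeltas} and the continuity on $\Omega$.
\end{enumerate}
In the distributional framework of the cited sources (Silvestre's in particular), $(-\Delta)^s u$ for $u$ in the tail space is defined by duality. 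There one must also show that this distribution coincides with the pointwise integral. Since the paper defines $(-\Delta)^s u(x)$ directly as a principal value, that identification step is unnecessary, and your direct argument is exactly what is needed.

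Two small corrections to the continuity part. First, the exceptional null set is the limit sphere $\{|x-y|=r\}$, not $\{|x_n-y|=r\}$. Off that sphere, $\mathbf{1}_{\{|x_n-y|\ge r\}}\to\mathbf{1}_{\{|x-y|\ge r\}}$. Second, the ``main obstacle'' you flag is not actually present, and no smooth cutoff is needed. On $\{|x_n-y|\ge r\}$ the kernel is at most $r^{-N-2s}$. For $|y|\ge 2(|x|+r)$ you have $|x_n-y|\ge |y|-|x_n|\ge|y|/2$, because $|x_n|<|x|+r$. Hence the majorant $C|u(y)|\,(1+|y|^{N+2s})^{-1}$ is uniform in $n$ as soon as $|x_n-x|<r$.
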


\begin{remark} \label{rem:BoundedLs}
Observe that the weight $\kappa(x) = (1+|x|^{N+2s})^{-1}$ belongs to $L^r(\mathbb{R}^N)$ for every $1\leq r\leq +\infty$. Consequently,
\begin{equation} \label{eq:LinfinLs}
L^r(\mathbb{R}^N)\subseteq \mathbb{L}^s(\mathbb{R}^N)\quad\forall\,\,1\leq r\leq+\infty.
\end{equation}
This fact will be used repeatedly in the sequel.
\end{remark}

In the special case $\Omega = \R^N$ and $u\in\mathcal{S}$ (the Schwartz space of rapidly decreasing functions), an equivalent definition of $(-\Delta)^s u$ can be given via the Fourier transform $\mathfrak{F}$.

\begin{proposition} \label{prop:DeltasFourier}
Let $u\in\mathcal{S}\subseteq \mathbb{L}^s(\R^N)$. Then
\begin{equation} \label{eq23}
(-\Delta)^s u(x) = \mathfrak F^{-1} \big( |\xi|^{2s}\mathfrak F u  \big)(x)
\quad \text{for all } x\in\R^N.
\end{equation}
\end{proposition}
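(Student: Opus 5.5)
To prove Proposition \ref{prop:DeltasFourier}, I plan to compute the Fourier transform of the singular-integral expression from Proposition \ref{prop:welldefDeltas} directly, and then identify the resulting multiplier as $|\xi|^{2s}$ by a homogeneity and rotation-invariance argument. First, a Schwartz function $u$ belongs to $C^\infty(\R^N)\cap L^\infty(\R^N)$. By Remark \ref{rem:BoundedLs} it also lies in $\mathbb{L}^s(\R^N)$, so Proposition \ref{prop:welldefDeltas} applies with $\Omega=\R^N$ for every $s\in(0,1)$. I will use the symmetric second-difference representation
\[
(-\Delta)^s u(x) = -\frac{C_{N,s}}{2}\int_{\R^N}\frac{u(x+z)+u(x-z)-2u(x)}{|z|^{N+2s}}\,dz.
\]
This formula holds for all $s\in(0,1)$ when $u\in\mathcal S$: the principal value can be symmetrized by the change of variables $z\mapsto -z$, and a second-order Taylor expansion makes the integrand absolutely integrable near $z=0$. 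Its tail decays like $|z|^{-N-2s}$, so the integral converges absolutely there as well.

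Next, I will take the Fourier transform in $x$ and exchange the order of integration via Fubini. The integrand is bounded by $\min\{\|D^2u\|_\infty|z|^2,4\|u\|_\infty\}|z|^{-N-2s}$ in sup norm, and the analogous bound holds in $L^1_x$ using $\|D^2u\|_{L^1}$ and $\|u\|_{L^1}$. Hence $(x,z)\mapsto \frac{u(x+z)+u(x-z)-2u(x)}{|z|^{N+2s}}$ lies in $L^1(\R^N\times\R^N)$. Since $\mathfrak F[u(\cdot\pm z)](\xi)=e^{\pm i\langle \xi,z\rangle}\mathfrak F u(\xi)$, I expect
\[
\mathfrak F\big((-\Delta)^s u\big)(\xi)=\Big(\frac{C_{N,s}}{2}\int_{\R^N}\frac{2-2\cos\langle\xi,z\rangle}{|z|^{N+2s}}\,dz\Big)\mathfrak Fu(\xi)=:m(\xi)\,\mathfrak Fu(\xi).
\]

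The key step is to show $m(\xi)=|\xi|^{2s}$. The integral defining $m$ converges, because $1-\cos t\le \min\{t^2/2,2\}$. It is invariant under rotations of $\xi$, since $dz$ and $|z|$ are rotation invariant. Substituting $z=w/|\xi|$ shows it is homogeneous of degree $2s$. Therefore $m(\xi)=m(e_1)|\xi|^{2s}$, and it remains to check that
\[
C_{N,s}\int_{\R^N}\frac{1-\cos z_1}{|z|^{N+2s}}\,dz=1.
\]
This normalization is the main obstacle. I would handle it by first integrating out $z'\in\R^{N-1}$, which gives a constant times $\int_\R(1-\cos t)|t|^{-1-2s}\,dt$. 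The remaining integral can be evaluated through $\int_0^\infty(1-\cos t)t^{-1-2s}\,dt=\Gamma(1-2s)\cos(\pi s)/(2s)$, or equivalently $-\Gamma(-2s)\cos(\pi s)$, with the case $s=1/2$ treated by continuity. The resulting product of Gamma factors should then reduce, via the duplication and reflection formulas, to $1/C_{N,s}$ with $C_{N,s}$ as given. Alternatively, I could cite the standard computation in \cite{KKP,Silv0}, since the constant $C_{N,s}$ was chosen precisely for this identity.

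Finally, $(-\Delta)^s u$ is continuous (Proposition \ref{prop:welldefDeltas}) and lies in $L^1(\R^N)$ by the Fubini bound above. Its Fourier transform $|\xi|^{2s}\mathfrak Fu$ is also in $L^1$, so Fourier inversion holds pointwise at every $x$, which yields \eqref{eq23}.
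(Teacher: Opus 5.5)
The paper does not prove this proposition. It simply recalls it as a classical fact, with $C_{N,s}$ chosen precisely so that the identity holds. Your argument is the standard proof of that fact: symmetrized second-difference representation, Fubini via the $L^1(\R^N\times\R^N)$ bound, rotation invariance and $2s$-homogeneity of the multiplier, normalization, and pointwise Fourier inversion for a continuous $L^1$ function with $L^1$ transform. It is correct and self-contained, so beyond the paper's citation there is nothing to compare.

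Three small remarks.

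First, Proposition \ref{prop:welldefDeltas} as stated in the paper omits $s=1/2$. So the symmetric representation for all $s\in(0,1)$ comes from your direct symmetrization argument, not from that proposition. Likewise, continuity of $(-\Delta)^s u$ is best obtained by dominated convergence in the symmetric formula, using the bound $\min\{\|D^2u\|_\infty|z|^2,4\|u\|_\infty\}|z|^{-N-2s}$.

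Second, the normalization you leave as ``should reduce'' does work out. Integrating out $z'$ gives the factor $\int_{\R^{N-1}}(1+|w|^2)^{-(N+2s)/2}\,dw=\pi^{(N-1)/2}\Gamma(s+\tfrac12)/\Gamma(\tfrac N2+s)$. Duplication and reflection give $\int_\R(1-\cos t)|t|^{-1-2s}\,dt=-2\Gamma(-2s)\cos(\pi s)=2^{-2s}\sqrt{\pi}\,|\Gamma(-s)|/\Gamma(s+\tfrac12)$; this equals $\pi$ at $s=1/2$, consistent with your continuity argument. The product is $\pi^{N/2}|\Gamma(-s)|/(2^{2s}\Gamma(\tfrac{N+2s}{2}))=1/C_{N,s}$, as required.

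Third, the symbol $|\xi|^{2s}$, rather than $(2\pi|\xi|)^{2s}$, presupposes the convention $\mathfrak F u(\xi)\propto\int u(x)e^{-i\langle x,\xi\rangle}\,dx$. This is the convention implicit in your translation rule $\mathfrak F[u(\cdot\pm z)]=e^{\pm i\langle\xi,z\rangle}\mathfrak Fu$.
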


\begin{remark} \label{rem:hintweakLap}
Since the Fourier transform extends to an isometry on $L^2(\R^N)$, identity \eqref{eq23} allows one to define the fractional Laplacian for less regular functions.

More precisely, one can set
$$(-\Delta)^s u = \mathfrak F^{-1} \big( |\xi|^{2s}\mathfrak F u  \big)\in L^2(\R^N)$$
for every $u\in L^2(\R^N)$ such that $|\xi|^{2s}\mathfrak F u \in L^2(\R^N)$.
\end{remark}
Due to its relevance in the sequel,
we  state
in the next theorem the well-known frac\-tional version of the \emph{Leibniz formula} and of the \emph{integration-by-parts formula}
for the Laplacian.
\begin{theorem}  \label{thm:LeibnizandByParts}
The following assertions hold.
\begin{enumerate}
	\item Assume that $f,g\in\mathbb{L}^s(\R^N)$ satisfy
  the following properties:
\begin{align*}
	\mathrm{a)}\,\,&\text{$f,g\in C_{\mathrm{loc}}^{2s+\gamma}(\R^N)$ for some $\gamma\in (0,1-2s)$ if $0<s<1/2$, \textbf{or}} \\
	&\text{$f,g\in C^{1,2s+\gamma-1}_{\mathrm{loc}}(\R^N)$
	for some
  	$\gamma\in (0,2-2s)$ if $1/2<s<1$}; \\[0.15cm]
  	\mathrm{b)}\,\,&fg\in\mathbb{L}^s(\R^N);
\end{align*}
 Then, the following \emph{Leibniz formula} holds:
 \begin{equation}\label{eq:Leibniz_rule}
	(-\Delta)^s[fg](x) = f(x) (-\Delta)^s g(x) + (-\Delta)^sf(x) g(x) - \mathcal{B}(f,g)(x),
\end{equation}
\noindent where the bilinear form $\mathcal{B}$ is defined as follows:
\begin{equation}\label{eq:def_B}
	\mathcal{B}(f,g)(x):=C_{N,s}\int_{\mathbb{R}^{N}}\dfrac{(f(x)-f(y))(g(x)-g(y))}{|x-y|^{N+2s}}\, dy \quad \textrm{for all } x \in \mathbb{R}^{N}.
\end{equation}

\item Let $\Omega\subseteq\R^N$ be a bounded open set,
and let $u,v$ satisfy the following properties:
	\begin{align*}
	\mathrm{a)}\,\,&\text{$u,v\in C_{\mathrm{loc}}^{2s+\gamma}(\Omega)$ for some $\gamma\in (0,1-2s)$ if $0<s<1/2$, \textbf{or}} \\
	&\text{$u,v\in C^{1,2s+\gamma-1}_{\mathrm{loc}}
	(\Omega)$ for some
  	$\gamma\in (0,2-2s)$ if $1/2<s<1$}; \\[0.15cm]
  	\mathrm{b)}\,\,&\text{$u\in L^\infty(\R^N)$
  	and $v\in C^s(\R^N)$}; \\[0.15cm]
  	\mathrm{c)}\,\,&\text{$v\equiv 0$ on $\R^N\setminus\Omega$ and $(-\Delta)^sv\in L^1(\R^N)$}.
\end{align*}
Then, the following formula holds:
\begin{equation}\label{eq:Integration_by_parts}
	\int_\Omega v(-\Delta)^s u\,dx = \int_{\mathbb{R}^{N}}u (-\Delta)^s v \, dx
	\end{equation}
\end{enumerate}
\end{theorem}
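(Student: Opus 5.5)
For the Leibniz formula (1), the natural route is to write both sides through the singular integral of Proposition \ref{prop:welldefDeltas} and compare integrands pointwise. The starting point is the elementary algebraic identity
\[
f(x)g(x)-f(y)g(y) = f(x)\big(g(x)-g(y)\big) + g(x)\big(f(x)-f(y)\big) - \big(f(x)-f(y)\big)\big(g(x)-g(y)\big),
\]
valid for all $x,y\in\R^N$. Dividing by $|x-y|^{N+2s}$, integrating over $\{|x-y|\ge\varepsilon\}$ and multiplying by $C_{N,s}$ gives, for each fixed $\varepsilon>0$, the truncated version of \eqref{eq:Leibniz_rule}. On the truncated region every term is absolutely integrable: $f,g,fg\in\mathbb{L}^s(\R^N)$ controls the tails. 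The truncated cross term is also finite, since $|f(x)-f(y)||g(x)-g(y)|\le |f(x)g(x)|+|f(x)||g(y)|+|g(x)||f(y)|+|f(y)g(y)|$, and each piece is integrable against $|x-y|^{-N-2s}$ away from the diagonal by the tail-space assumptions.

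The remaining point is the passage $\varepsilon\to0^+$. The limits for $(-\Delta)^s g(x)$ and $(-\Delta)^s f(x)$ exist by Proposition \ref{prop:welldefDeltas}, applied with $\Omega=\R^N$ using assumption a). For the bilinear term, a) gives local regularity near $x$. If $s<1/2$, then $|f(x)-f(y)||g(x)-g(y)|\lesssim |x-y|^{2(2s+\gamma)}$ for $|x-y|<1$; if $s>1/2$, both functions are locally Lipschitz and the product is $\lesssim|x-y|^2$. In either case the integrand of $\mathcal B(f,g)(x)$ is dominated near the diagonal by $|x-y|^{-N+\delta}$ for some $\delta>0$, so $\mathcal B(f,g)(x)$ converges absolutely and is the limit of its truncations by dominated convergence. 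The limit of the left-hand side therefore exists, and it equals $(-\Delta)^s[fg](x)$ as defined in \eqref{eq:defDeltas}, which proves \eqref{eq:Leibniz_rule}. The only care needed here is the integrability of the cross term, and it is routine.

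For the integration-by-parts formula (2), I would again truncate. Set $K_\varepsilon(z)=C_{N,s}|z|^{-N-2s}\mathbf 1_{\{|z|\ge\varepsilon\}}$ and $L_\varepsilon w(x)=\int (w(x)-w(y))K_\varepsilon(x-y)\,dy$. Since $u\in L^\infty$ and $v$ is bounded with compact support in $\overline\Omega$, Fubini together with the symmetry of $K_\varepsilon$ gives
\[
\int_{\R^N} v\,L_\varepsilon u\,dx=\tfrac12\iint (u(x)-u(y))(v(x)-v(y))K_\varepsilon(x-y)\,dx\,dy=\int_{\R^N} u\,L_\varepsilon v\,dx .
\]
Both sides are absolutely convergent because $v\in L^1\cap L^\infty$, $u\in L^\infty$ and $K_\varepsilon\in L^1$. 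Note that $\int_{\R^N}vL_\varepsilon u=\int_\Omega vL_\varepsilon u$ because $v\equiv0$ outside $\Omega$.

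The \textbf{main obstacle} is letting $\varepsilon\to0$, which requires dominated (or at least $L^1$) convergence of $L_\varepsilon u$ on $\Omega$ and of $L_\varepsilon v$ on $\R^N$, with only local regularity on $u$ and $v$ in $\Omega$.

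On the $v$ side, for $x\notin\overline\Omega$ we have $L_\varepsilon v(x)=-\int_\Omega v(y)K_\varepsilon(x-y)\,dy$, which for $\varepsilon<\mathrm{dist}(x,\Omega)$ equals $(-\Delta)^sv(x)$. Since $u$ is bounded, convergence on $\R^N\setminus\overline\Omega$ is fine by monotone/dominated convergence, using the assumption $(-\Delta)^s v\in L^1$. Inside $\Omega$ the local Hölder regularity gives pointwise convergence. The uniform bound is the delicate point: $v\in C^s(\R^N)$ gives $|L_\varepsilon v|$ locally bounded only up to a logarithmic-type loss near $\partial\Omega$. I would handle this by a cutoff argument. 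Take $\Omega'\Subset\Omega$, where the local regularity in a) gives uniform bounds on $L_\varepsilon u$ and $L_\varepsilon v$ via the symmetric second-difference or Hölder estimates used in Proposition \ref{prop:welldefDeltas}. Then control the strip $\Omega\setminus\Omega'$ using $|v(x)|\le C\,\mathrm{dist}(x,\partial\Omega)^s$ (from $v\in C^s$ and $v=0$ outside), against $|L_\varepsilon u(x)|\lesssim \mathrm{dist}(x,\partial\Omega)^{-2s}$. That product is $\mathrm{dist}^{-s}$, which is integrable, so dominated convergence applies. If this bound turns out to be too weak, I would fall back on the weak formulation: interpret both sides as the limit of the symmetric double integral, which converges by the $C^s$ regularity of $v$ paired with boundedness of $u$.
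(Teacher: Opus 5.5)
The paper gives no proof of this theorem; it quotes it as ``well known''. Your argument therefore has to stand on its own. Your proof of the Leibniz formula (1) is correct and is the standard one. The truncated algebraic identity, the tail bounds from $f,g,fg\in\mathbb{L}^s(\R^N)$, and the absolute convergence of $\mathcal{B}(f,g)(x)$ near the diagonal given by a) are exactly what is needed.

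Part (2) has a genuine gap, at the point you flagged: the domination near $\partial\Omega$.

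\emph{The bound on $L_\varepsilon u$ is unjustified.} The estimate $|L_\varepsilon u(x)|\lesssim \mathrm{dist}(x,\partial\Omega)^{-2s}$ does not follow from the hypotheses. Split at $r=\mathrm{dist}(x,\partial\Omega)/2$. The far part is indeed $\lesssim\|u\|_{L^\infty}r^{-2s}$. The near part, however, is controlled only by the $C^{2s+\gamma}$ (resp.\ $C^{1,2s+\gamma-1}$) seminorm of $u$ on $B_r(x)$. Hypothesis a) is purely qualitative, so this seminorm may blow up at any rate as $x\to\partial\Omega$.

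\emph{The same problem affects the $v$ side.} $L_\varepsilon v$ inside $\Omega$ near $\partial\Omega$ has the same issue. Note that $v\in C^s$ alone gives only $|L_\varepsilon v|\lesssim\varepsilon^{-s}$, not a logarithmic loss. Also, your exterior step via monotone convergence tacitly uses $v\geq 0$.

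\emph{The fallback fails too.} For $u\in L^\infty$ and $v\in C^s$, the integrand of the symmetric double integral is only $\lesssim|x-y|^{-N-s}$, which is not integrable near the diagonal. Even granting a common limit of the two truncated sides, identifying it with $\int_\Omega v(-\Delta)^s u\,dx$ is precisely the missing domination.

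In fact, no argument can close this step under the hypotheses as literally stated. Here is a counterexample.
\begin{enumerate}
\item Take $N=1$, $\Omega=(-1,1)$ and $v=(1-x^2)_+^s$. Then $v\in C^s(\R)$. $(-\Delta)^s v$ is constant in $\Omega$, of order $\mathrm{dist}(x,\Omega)^{-s}$ just outside $\pm1$, and of order $|x|^{-1-2s}$ at infinity. Hence $(-\Delta)^s v\in L^1(\R)$.
\item Let $0\le u\le 1$ be a sum of disjoint smooth plateau bumps accumulating at $x=1$. In $(1-2^{-j},1-2^{-j-1})$ place $M_j$ bumps of width $\sim r_j=2^{-j}/M_j$, each equal to $1$ on a plateau of length $\sim r_j$, separated by gaps of length $\sim r_j$. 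Then $u\in L^\infty(\R)\cap C^\infty(\Omega)$.
\item Since $u$ attains its maximum on each plateau, $(-\Delta)^s u\gtrsim r_j^{-2s}$ there. Hence $\int_\Omega v\,\big((-\Delta)^s u\big)^+dx\gtrsim\sum_j M_j^{2s}2^{-j(1-s)}$, which is $+\infty$ for $M_j\sim 2^{j(1-s)/(2s)}$.
\item Meanwhile the right-hand side of \eqref{eq:Integration_by_parts} is finite.
\end{enumerate}

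So an extra hypothesis is needed, for instance that the regularity in a) holds on a neighbourhood of $\mathrm{supp}\,v$, i.e.\ $\mathrm{supp}\,v\Subset\Omega$. This is the only case the paper actually uses (Remark \ref{rem:HowtouseFormula}: $u(\cdot,t)\in C^2(\R^N)\cap L^\infty(\R^N)$ and $v=\chi_R\kappa\in C^2_c(\R^N)$). There your truncation and Fubini argument closes immediately. First, $\sup_{\varepsilon}\sup_{\mathrm{supp}\,v}|L_\varepsilon u|<\infty$ by the local $C^2$ bound and $\|u\|_{L^\infty}$. Second, $|L_\varepsilon v(x)|\le C(1+|x|)^{-N-2s}$ uniformly in $\varepsilon$. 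So dominated convergence applies on both sides.
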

\begin{remark} \label{rem:HowtouseFormula}
In the proof of our main theorem, the above formulas \eqref{eq:Leibniz_rule}
and \eqref{eq:Integration_by_parts} will be applied under the following assumptions:
\begin{enumerate}
\item $f\in C_c^\infty(\R^N)$ and $g\in C^2(\R^N)\cap L^1(\R^N)$
(for \eqref{eq:Leibniz_rule});
\vspace{0.1cm}

\item  $v=fg\in C_c^2(\R^N)$
(for \eqref{eq:Integration_by_parts}).
\end{enumerate}
Under these assumptions, the hypotheses of Theorem
\ref{thm:LeibnizandByParts} are satisfied; in particular,
since (in our context) $v\in C^2_0(\R^N)$, one has
$(-\Delta)^s v\in L^1(\R^N)$.
since
\end{remark}

\noindent \textbf{2) Classical solutions to problem \eqref{eq:mainPbParabolicIntro}.}
Having reviewed the basic properties of the fractional Laplacian, we can now introduce the notion of \emph{classical solution} to the Cauchy problem \eqref{eq:mainPbParabolicIntro}. From now on, we tacitly assume that
$$\mathbf{a} \geq 0,\qquad \mathbf{b}>0,\qquad \text{and}\qquad \text{$f$ satisfies assumption $(H)_f$}.$$
We stress that the choice $\mathbf{a}=0$,
which corresponds to the purely nonlocal operator
$$\LL=\mathbf{b}(-\Delta)^s,$$
is fully admissible; therefore, all the results that follow remain valid in this case as well. However, since the main goal of this paper is to investigate the interplay between the local and the nonlocal parts of the operator, we formulate the theory having primarily in mind the genuinely mixed case $\mathbf{a},\mathbf{b}>0$. In particular, the definition of classical solution to \eqref{eq:mainPbParabolicIntro} is tailored to this setting (see Remark \ref{rem:DefWellPosed}-(3)).

\begin{definition}\label{defsole}
Let $u_0\in C_+(\mathbb{R}^N)\cap L^\infty(\mathbb{R}^N)$. A function
$$u: \mathbb{R}^N\times[0,T)\to \mathbb{R}$$
(for some $0<T\leq+\infty$) is called a \emph{classical solution} to \eqref{eq:mainPbParabolicIntro} if:
\begin{enumerate}
 	\item $u\in C_+(\mathbb{R}^N\times[0,T))\cap  C^{2,1}_{x,t}(S_T)$;	
 	\vspace{0.1cm}
 	
	\item $u\in L^\infty(\mathbb{R}^N\times[0,\tau])$ for every $0<\tau<T$;

 	\vspace{0.1cm}
 	
	\item $u$ satisfies \eqref{eq:mainPbParabolicIntro} pointwise, namely
	\begin{align*}
	\mathrm{a)}\,\,&u_t(x,t)+\mathcal{L}[u(\cdot,t)](x) = f(u(x,t))	\quad\forall (x,t)\in S_T, \\
	\mathrm{b)}\,\,&u(x,0) = u_0(x)\quad\forall x\in\mathbb{R}^N.
	\end{align*}
\end{enumerate}
If $T=+\infty$, we say that $u$ is a \emph{global solution} (to \eqref{eq:mainPbParabolicIntro}).
\end{definition}

\begin{remark} \label{rem:DefWellPosed}
We make a few remarks concerning Definition \ref{defsole}.

\begin{enumerate}
	\item First, we point out that Definition \ref{defsole} is \emph{well posed} in the following sense: if
 $$u:\mathbb{R}^N\times [0,T)\to\mathbb{R}$$
 (with $0<T\leq+\infty$) satisfies properties (1)--(2), then the quantity
	$\LL[u(\cdot,t)](x)$
	is well defined for every $x\in\mathbb{R}^N$ and every $t\in (0,T)$.
	\vspace{0.1cm}

	Indeed, fixing $t\in (0,T)$, properties (1)--(2) yield
	$$0\leq u(x,t)\leq \|u\|_{L^\infty(\mathbb{R}^N\times[0,t])}
	\qquad \forall\,\,x\in\mathbb{R}^N,$$
	so that $u(\cdot,t)\in L^\infty(\mathbb{R}^N)\subseteq \mathbb{L}^s(\mathbb{R}^N)$ (see \eqref{eq:LinfinLs}) and
	\begin{equation} \label{eq:ucdottBounded}
	\|u(\cdot,t)\|_{L^\infty(\mathbb{R}^N)} \leq \|u\|_{L^\infty(\mathbb{R}^N\times[0,t])}<+\infty.
	\end{equation}	
	As a consequence, since we also have $u(\cdot,t)\in C^2(\mathbb{R}^N)$ by property (1),  from Proposition \ref{prop:welldefDeltas} we derive that $\LL[u(\cdot,t)]$ is well defined pointwise in $\mathbb{R}^N$, and
	$$x\mapsto \LL[u(\cdot,t)](x)\in C(\mathbb{R}^N).$$

	\item Let $u:\mathbb{R}^N\times[0,+\infty)\to \mathbb{R}^+_0$ be a \emph{global solution} to the Cauchy problem \eqref{eq:mainPbParabolicIntro}. Then, by \eqref{eq:ucdottBounded} (and the arbitrariness of $t\in(0,+\infty)$), we have
	$$u(\cdot,t)\in L^\infty(\mathbb{R}^N)\quad \text{for every } t>0.$$
	
	\item As already discussed, the above definition is \emph{modeled on} the genuinely mixed case $\mathbf{a},\mathbf{b}>0$, since $C^2$-regularity in the space variable is not needed when $\mathbf{a}=0$. In that case, taking Proposition \ref{prop:welldefDeltas} into account, it is enough to require
\begin{itemize}
	\item 
	$u(\cdot,t)\in C^{2s+\gamma}_{\mathbf{loc}}(\R^n)$
	for some $\gamma\in (0,1-2s)$ if $0<s<1/2$;
	\vspace{0.2cm}
	
	\item $u(\cdot,t)\in C^{1,2s-1+\gamma}_{\mathbf{loc}}(\R^n)$
	for some $\gamma\in (0,2-2s)$ if $1/2<s<1$.
	\end{itemize}
\end{enumerate}
\end{remark}

\section{Kaplan functions for $\LL$ and the main results} \label{sec:MainThm}

Thanks to Definition \ref{defsole}, we are now in a position to state and prove our main non-exi\-stence results for global solutions to the Cauchy problem \eqref{eq:mainPbParabolicIntro}.
Before doing so, and in order to improve the readability of the statement, we introduce the following auxiliary \emph{ad hoc} definition.
Throughout what follows, given $\beta > 0$, we denote by $\Psi_\beta$ the weight function
$$\Psi_\beta(x) = \frac{1}{(1+|x|^2)^\beta}.$$
Furthermore, for every $\lambda > 0$ we define
\begin{equation} \label{eq:defsflambda}
\mathbf{s}_f(\lambda) = \inf\Big\{z > 0:\,\frac{f(z)}{z} > \lambda\Big\}\in[0,+\infty).	
\end{equation}
We note that, for every $\lambda > 0$, assumption $(H)_f$-(ii) ensures that
$$
\mathcal{S}(\lambda) = \Big\{z > 0:\,\frac{f(z)}{z} > \lambda\Big\}\neq \varnothing,
$$
and therefore $\mathbf{s}_f(\lambda)$ is well defined. Moreover, in the prototypical case $f(z)=z^p$ (with $p>1$), it is straightforward to check that
\begin{equation} \label{eq:sflambdapower}
\mathbf{s}_f(\lambda) = \lambda^{1/(p-1)}.
\end{equation}
\begin{definition}[Kaplan function for $\LL$] \label{def:KaplanFunction}
We say that a function $\kappa:\mathbb{R}^N\to\mathbb{R}$ is a \emph{Kaplan function} for the operator $\LL$ if it satisfies the following properties:
\begin{enumerate}
	\item $\kappa\in C^2(\mathbb{R}^N)$ and $\kappa > 0$ in $\mathbb{R}^N$;
	\vspace{0.1cm}
		 		
	\item $\kappa,\,|\nabla\kappa|\in L^1(\mathbb{R}^N)$ and $\|\kappa\|_{L^1(\mathbb{R}^N)} = 1$;
	\vspace{0.1cm}
	
	\item there exists $\lambda > 0$ such that
	$$-\LL\kappa+\lambda\kappa \geq 0 \quad \text{pointwise in }\mathbb{R}^N.$$
\end{enumerate}
\end{definition}

\begin{remark} \label{re:WellPosedDefKaplan}
Arguing as in Remark \ref{rem:DefWellPosed}, we observe that Definition \ref{def:KaplanFunction} is well posed in the following sense: if $\kappa:\mathbb{R}^N\to\mathbb{R}$ satisfies properties (1)--(2), then $\LL\kappa$ can be computed pointwise in $\mathbb{R}^N$.
This follows directly from Proposition \ref{prop:welldefDeltas}, taking into account that
$$L^1(\mathbb{R}^N)\subseteq\mathbb{L}^s(\mathbb{R}^N).$$
\end{remark}

In what follows, we will need to exhibit a suitable cut-off function built upon a smooth function $\chi:[0,+\infty)\to [0,1]$ satisfying the following properties:
\begin{itemize}
	\item[i)]  $\chi \in C^{\infty}_{0}([0,+\infty)$;
	\item[ii)] $\chi \equiv 1$ in $[0,1]$ and $\chi \equiv 0$ in $[2,+\infty)$;
	\item[iii)] there exist two positive constants $C_1,C_2>0$ such that
	$$|\chi'(t)| \leq C_1 \,  \mathbf{1}_{(1,2)}(t), \quad |\chi''(t)|\leq C_{2} \, \mathbf{1}_{(1,2)}(t) \quad \textrm{for all } t \in [0,+\infty).$$
\end{itemize}
Once $\chi$ is as above, for any $R>0$ we define the function $\chi_{R}:\mathbb{R}^N \to [0,1]$ as
\begin{equation}\label{eq:Def_Chi_R}
	\chi_{R}(x):= \chi \left(\dfrac{|x|}{R}\right).
\end{equation}	

\begin{lemma}\label{lem:chiR}
	Let $\chi_{R}$ be defined as in \eqref{eq:Def_Chi_R}. Then:
	\begin{itemize}
		\item[a)] $\chi_{R}\in C^{\infty}_{0}(\mathbb{R}^{N})$, with $\chi_{R}\equiv 1$ in $\overline{B}_{R}$ and $\chi_{R}\equiv 0$ in $\mathbb{R}^{N} \setminus B_{2R}$;
		\item[b)] for all $x\in \mathbb{R}^{N}$, $\lim_{R \to +\infty}\chi_{R}(x)=1$;
		\item[c)] there exists a positive constant $C_3>0$ such that
		\begin{equation}\label{eq:stima_grad_chiR}
			|\nabla \chi_{R}(x)| \leq \dfrac{C_3}{R} \, \mathbf{1}_{B_{2R}\setminus \overline{B}_{R}}(x) \quad \textrm{for all } x \in \mathbb{R}^{N};
		\end{equation}	
		\item[d)] there exists a positive constant $C_4>0$ such that
		\begin{equation}
			|\Delta \chi_R(x)| \leq \dfrac{C_4}{R^2} \, \mathbf{1}_{B_{2R}\setminus \overline{B}_{R}}(x) \quad \textrm{for all } x \in \mathbb{R}^{N};
		\end{equation}
	\end{itemize}
\end{lemma}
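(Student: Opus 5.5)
The plan is to argue directly from the definition $\chi_R(x)=\chi(|x|/R)$ and the properties i)--iii) of $\chi$, using the chain rule for radial functions. One point deserves care from the start. The function $x\mapsto |x|$ is not smooth at the origin. However, $\chi\equiv 1$ on $[0,1]$, so $\chi_R\equiv 1$ on $B_R$, and smoothness near $0$ is automatic. Away from the origin, $\chi_R$ is a composition of smooth maps, hence $C^\infty$. Since $\chi\equiv 0$ on $[2,+\infty)$, the support of $\chi_R$ is contained in $\overline{B}_{2R}$, which gives compact support. The identities $\chi_R\equiv1$ on $\overline B_R$ and $\chi_R\equiv 0$ off $B_{2R}$ follow from ii). This settles a).

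For b), fix $x$. As $R\to+\infty$ we have $|x|/R\to0$, so $|x|\le R$ once $R\ge|x|$, and then $\chi_R(x)=1$.

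For c) and d), work on $x\neq0$ and write $r=|x|$. The chain rule gives
\[
\nabla\chi_R(x)=\frac{1}{R}\chi'\!\Big(\frac{r}{R}\Big)\frac{x}{r},
\qquad
\Delta\chi_R(x)=\frac{1}{R^2}\chi''\!\Big(\frac{r}{R}\Big)+\frac{N-1}{rR}\chi'\!\Big(\frac{r}{R}\Big),
\]
using the formula for the Laplacian of a radial function. By iii), $\chi'(r/R)$ and $\chi''(r/R)$ vanish unless $1<r/R<2$, that is, unless $x\in B_{2R}\setminus\overline B_R$. On that annulus we have $r>R$, so $1/(rR)\le 1/R^2$. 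Therefore $|\nabla\chi_R|\le C_1R^{-1}\mathbf 1_{B_{2R}\setminus\overline B_R}$, and we may take $C_3=C_1$. Likewise $|\Delta\chi_R|\le (C_2+(N-1)C_1)R^{-2}\mathbf 1_{B_{2R}\setminus\overline B_R}$, and we may take $C_4=C_2+(N-1)C_1$. At $x=0$ both quantities vanish, since $\chi_R$ is constant near $0$.

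The only mildly delicate step is the one flagged at the outset: one cannot apply the chain rule through $|x|$ at the origin. The way around it is to invoke the local constancy of $\chi_R$ on $B_R$ and never differentiate through $|x|$ there. Everything else is routine.
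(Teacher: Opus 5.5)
Your proof is correct and complete. The paper states this lemma without proof and treats it as standard. Your argument is exactly the routine verification it leaves implicit: smoothness at the origin comes from the local constancy of $\chi_R$ on $B_R$, the radial chain-rule formulas hold away from $0$, and the bound $1/(|x|R)\le 1/R^2$ on the annulus gives the $R$-independent constants $C_3=C_1$ and $C_4=C_2+(N-1)C_1$.
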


\begin{theorem} \label{thm:MainNonExistenceKaplan}
Assume that there exists a Kaplan function $\kappa$ for $\LL$, which additionally sati\-sfies the following property: there exists a constant $c > 0$ such that
\begin{equation} \label{eq:ExtraKaplanIntbyParts}
	0\leq \kappa+|\nabla \kappa|\leq c\,\Psi_\beta \quad \text{pointwise in }\mathbb{R}^N;
\end{equation}
If $u_0\in C_+(\mathbb{R}^N)\cap L^\infty(\mathbb{R}^N)$ satisfies
\begin{equation} \label{eq:KaplanConditionuzero}
	\int_{\mathbb{R}^N}\kappa(x)u_0(x)\,dx > \mathbf{s}_f(\lambda)
\end{equation}
\emph{(}where $\lambda > 0$ is as in Definition \ref{def:KaplanFunction}-(3), and $\mathbf{s}_f(\lambda)$ is as in \eqref{eq:defsflambda}\emph{)},  the Cauchy problem \eqref{eq:mainPbParabolicIntro} with initial datum $u_0$ does \emph{not} admit global classical solutions which satisfy
   \begin{equation}\label{e2}
   \kappa u_t(\cdot,t) \in L^{1}(\mathbb{R}^N)\quad\forall\,\,t\in (0,T).
   \end{equation}
   
\begin{proof}
We will argue by contradiction therefore assuming that
$u$ is a global solution to \eqref{eq:mainPbParabolicIntro}.
Let us now fix $t>0$ and $R>0$. Multiplying first equation \eqref{eq:mainPbParabolicIntro} by $\chi_{R}\kappa$ and then integrating over the entire space $\mathbb{R}^N$, gives
\begin{equation}\label{eq:testando_con_chiRk}
	\begin{aligned}
	\int_{\mathbb{R}^{N}}\chi_{R}\kappa u_{t}\, dx &= \int_{\mathbb{R}^{N}}\chi_{R}\kappa \, (-\mathcal{L}u)\, dx + \int_{\mathbb{R}^{N}}\chi_{R}\kappa f(u)\, dx\\
	&= \mathbf{a}\, \int_{\mathbb{R}^{N}}\chi_{R}\kappa \, (\Delta u)\, dx - \mathbf{b}\, \int_{\mathbb{R}^{N}}\chi_{R}\kappa \,(-\Delta)^{s} u\, dx + \int_{\mathbb{R}^{N}}\chi_{R}\kappa f(u)\, dx\\
	&=: \mathbf{a} \, \mathcal{I}_1 - \mathbf{b} \, \mathcal{I}_2 + \mathcal{I}_3.
\end{aligned}
\end{equation} 	
Let us start with $\mathcal{I}_1$. Recalling Lemma \ref{lem:chiR}-a), there exists a compact set $D$ such that $\mathrm{supp}(\chi_{R})\subseteq \overline{B}_{2R}\subset D$ and with the additional properties that $\chi_{R}\equiv 0$ on $\partial D$ and in $\mathbb{R}^{N}\setminus D$. Therefore, integrating by parts we get
\begin{equation}\label{eq:I1}
	\begin{aligned}
		\mathcal{I}_1 &= \int_{D}\chi_{R}\kappa \, (\Delta u)\, dx = -\int_{D}\langle \nabla(\chi_R k), \nabla u\rangle \, dx =  \\
		&= -\int_{D}\chi_R \langle \nabla k, \nabla u\rangle \, dx -\int_{D}k\langle \nabla \chi_R, \nabla u\rangle \, dx\\
		&=: -\overline{\mathcal{I}}_1 - \widetilde{\mathcal{I}}_1.
	\end{aligned}
\end{equation}
Now, focusing on $\overline{\mathcal{I}}_1$, we notice that
\begin{equation}\label{eq:overI1}
	\begin{aligned}
		\overline{\mathcal{I}}_1 &= \int_{D} \langle \nabla k, \nabla (\chi_{R}u)\rangle \, dx - \int_{D}u\langle \nabla k, \nabla \chi_{R}\rangle \, dx \\
		& = -\int_{D}\chi_{R}u \Delta k \, dx - - \int_{D}u\langle \nabla k, \nabla \chi_{R}\rangle \, dx \\
		&= -\int_{\mathbb{R}^{N}}\chi_{R}u \Delta k \, dx -  \int_{\mathbb{R}^{N}}u\langle \nabla k, \nabla \chi_{R}\rangle \, dx,
	\end{aligned}
\end{equation}
\noindent while, as long as $\widetilde{\mathcal{I}}_{1}$ is concerned
\begin{equation}\label{eq:tildeI1}
	\begin{aligned}
		\widetilde{\mathcal{I}}_{1} &= \int_{D}\langle \nabla \chi_R, \nabla (ku)\rangle \, dx - \int_{D}u \langle \nabla \chi_{R},\nabla k\rangle \, dx\\
		&= - \int_{D}ku \nabla \chi_{R} \, dx - \int_{D}u \langle \nabla \chi_{R},\nabla k\rangle \, dx\\
		&= - \int_{\mathbb{R}^{N}}ku \nabla \chi_{R} \, dx - \int_{\mathbb{R}^{N}}u \langle \nabla \chi_{R},\nabla k\rangle \, dx.
	\end{aligned}
\end{equation}
Combining \eqref{eq:I1}, \eqref{eq:overI1} and \eqref{eq:tildeI1}, we finally get that
\begin{equation*}
	\begin{aligned}
	\mathcal{I}_{1} &= \int_{\mathbb{R}^{N}}\chi_{R}u\Delta k \, dx + \int_{\mathbb{R}^N}k u \Delta \chi_{R} \, dx + 2\int_{\mathbb{R}^{N}}u \langle \nabla k, \nabla \chi_{R}\rangle \, dx\\
	&=:\mathcal{I}'_{1} + \mathcal{I}''_{1} +  \mathcal{I}'''_{1}.
	\end{aligned}
\end{equation*}
Let us now deal with the nonlocal part provided by $\mathcal{I}_2$. By exploiting \eqref{eq:Integration_by_parts} and \eqref{eq:Leibniz_rule} (applied here with $u = u(\cdot,t)$ and $v = fg = \chi_R\kappa$, see Remark
\ref{rem:HowtouseFormula}),  we get
\begin{equation*}
	\begin{aligned}
		\mathcal{I}_{2} &= \int_{\mathbb{R}^{N}}u (-\Delta)^s(\chi_{R}k)\, dx \\
		&= \int_{\mathbb{R}^{N}}u \chi_R (-\Delta)^{s}k \, dx + \int_{\mathbb{R}^{N}}uk (-\Delta)^s \chi_{R}\, dx + \int_{\mathbb{R}^{N}}u\mathcal{B}(\chi_{R},k)\, dx\\
		&=: \mathcal{I}'_{2} + \mathcal{I}''_{2}+ \mathcal{I}'''_{3}.
	\end{aligned}
\end{equation*}
Now, by \cite[Lemma 3.2]{BMP}, it follows that
\begin{equation}\label{eq:Limite_locale}
	\int_{\mathbb{R}^N}k u \Delta \chi_{R} \, dx + 2\int_{\mathbb{R}^{N}}u \langle \nabla k, \nabla \chi_{R}\rangle \, dx \to 0 \quad \textrm{ as } R \to +\infty,
\end{equation}
\noindent and
\begin{equation}\label{eq:Limite_nonlocal}
	\int_{\mathbb{R}^{N}}uk (-\Delta)^s \chi_{R}\, dx + \int_{\mathbb{R}^{N}}u\mathcal{B}(\chi_{R},k)\, dx \to 0 \quad \textrm{as } R \to +\infty.
\end{equation}
Moreover, recalling Lemma \ref{lem:chiR}-b), and using the Dominated Conveergence Theorem, it follows that
\begin{equation}\label{eq:Limite_fu}
	\mathcal{I}_{3} = \int_{\mathbb{R}^N}\chi_R k f(u)\, dx \to \int_{\mathbb{R}^{N}}k f(u) \, dx \quad \textrm{ as } R \to +\infty.
\end{equation}
We are left with a couple of terms, namely $\mathcal{I}'_{1}$ and $\mathcal{I}'_{2}$. Here, we exploit first that $k$ is a Kaplan function for $\mathcal{L}$ and hence
\begin{equation}\label{eq:usando_Kaplan}
	\mathbf{a}\mathcal{I}'_{1} - \mathbf{b}\mathcal{I}'_{2} \geq -\lambda \int_{\mathbb{R}^{N}}\chi_R u k \, dx.
\end{equation}
Arguing as for the integral with $f(u)$, it follows that
\begin{equation}\label{eq:Limite_post_Kaplan}
	\int_{\mathbb{R}^{N}}\chi_R u k \, dx \to \int_{\mathbb{R}^{N}}u k \, dx \quad \textrm{ as } R \to +\infty,
\end{equation}
\noindent and that
\begin{equation}\label{eq:Limite_ut}
	\int_{\mathbb{R}^{N}}\chi_{R}\kappa u_{t}\, dx  \to \int_{\mathbb{R}^{N}}k u_t \, dx \quad \textrm{ as } R \to +\infty.
\end{equation}
Starting from \eqref{eq:testando_con_chiRk} and using \eqref{eq:usando_Kaplan},
we see that, thanks to \eqref{eq:Limite_ut}, \eqref{eq:Limite_locale}, \eqref{eq:Limite_nonlocal}, \eqref{eq:Limite_fu} and \eqref{eq:Limite_post_Kaplan}, we can pass to the limit as $R\to +\infty$ finding
\begin{equation}\label{eq:Dopo_il_Limite_in_R}
	\int_{\mathbb{R}^{N}}ku_t \, dx \geq -\lambda \int_{\mathbb{R}^{N}}k u \, dx + \int_{\mathbb{R}^{N}}k f(u)\, dx \quad \textrm{for all } t>0.
\end{equation}
Define now the function $\Phi:[0,+\infty)\to [0,+\infty)$
as
\begin{equation}\label{eq:Def_Phi}
	\Phi(t):= \int_{\mathbb{R}^{N}}k u(t) \, dx.
\end{equation}
Recalling that $u(\cdot,0)=u_0$ in $\mathbb{R}^N$ and \eqref{eq:KaplanConditionuzero}, it follows that
\begin{equation}\label{eq:Condizione_Iniziale_su_Phi}
	\Phi(0)> \mathbf{s}_f(\lambda).
\end{equation}
Due to the regularity of both $k$ and $u$, we have that $\Phi \in C^{1}((0,+\infty))$ and moreover
\begin{equation}\label{eq:Phi'}
	\Phi'(t) = \int_{\mathbb{R}^{N}}k u_t \, dx \quad \textrm{ for all } t>0.
\end{equation}
Since by Jensen's inequality
\begin{equation}\label{eq:usando_Jensen}
	\int_{\mathbb{R}^{N}}k f(u)\, dx \geq f\left(\int_{\mathbb{R}^{N}}k u \, dx \right),
\end{equation}
\noindent we can write \eqref{eq:Dopo_il_Limite_in_R} in terms of $\Phi$ (here using \eqref{eq:Phi'} as well) as a first order differential inequality of the form
\begin{equation}\label{e1}
	\Phi'(t) + \lambda \Phi(t) \geq f(\Phi(t))\quad \textrm{ for all } t >0,
\end{equation}
\noindent whose associated initial condition is given by \eqref{eq:Condizione_Iniziale_su_Phi}. By a direct qualitative analysis of \eqref{e1} and \eqref{eq:Condizione_Iniziale_su_Phi} (see e.g. \cite[Lemma 5.3]{PZ1}), we realize that $\Phi$ has to blow up in finite time therefore contradicting our initial assumption on the solution $u$. This closes the proof.
\end{proof}
\end{theorem}

\begin{remark}[On assumption \eqref{eq:ExtraKaplanIntbyParts}] \label{rem:OnEstimatekappa}
As will become clear in the proof of Theorem \ref{thm:MainNonExistenceKaplan}, assum\-ption \eqref{eq:ExtraKaplanIntbyParts} is purely technical, and is
related to the nonlocal nature of $\LL$: indeed, it allows us to control certain \emph{tail terms} arising from the nonlocal part of $\LL$ when applied to compactly supported functions. 

\vspace{0.1cm}

We also observe that, if $\beta > {N}/{2}$, then $\Psi_\beta\in L^1(\mathbb{R}^N)$, and hence \eqref{eq:ExtraKaplanIntbyParts} ensures that
$$\kappa,\,|\nabla\kappa|\in L^1(\mathbb{R}^N).$$
In particular, for such values of $\beta$, the integrability condition in Definition \ref{def:KaplanFunction}-(2) is automa\-tically satisfied.
\end{remark}

\begin{remark}[On assumption \eqref{e2}]\label{timeder}
As observed in Remark \ref{rem:OnEstimatekappa}, if
$\beta>\frac{N}{2}$, then $\kappa\in L^1(\mathbb{R}^N)$. Therefore,
if $u_t(t)\in L^\infty(\mathbb{R}^N)$ for every $t\in(0,T)$, then
\eqref{e2} is satisfied.

In this connection, let us note that, for purely local operators,
the above property of $u_t(t)$ is satisfied by the solutions
constructed via semigroup theory
(see, e.g., \cite[Propositions 7.1.10 and 7.3.1]{Lun}
and \cite[Remark1]{Pu2}). It is not clear whether an analogous
result remains valid in the present setting, and this issue lies
beyond the scope of the present paper.
\end{remark}

The above Theorem \ref{thm:MainNonExistenceKaplan} provides an abstract criterion for the non-existence of global solutions to the Cauchy problem \eqref{eq:mainPbParabolicIntro}.
To make this result effective, it is necessary to construct an explicit Kaplan function associated with the operator $\LL$.
\vspace{0.1cm}

This is the goal of the next theorem.
\begin{theorem} \label{thm:ExplicitKaplan}
Let $\beta > N/2$ and $\e\in(0,1]$ be fixed. Then, the function
$$\kappa_\e(x) = \frac{\e^{N/2}}{\mathbf{c}_\beta}\cdot\frac{1}{(1+\e|x|^2)^{\beta}}\qquad \big(\text{with $\mathbf{c}_\beta = \|\Psi_\beta\|_{L^1(\mathbb{R}^N)}<+\infty$}\big)$$
is a Kaplan function for $\LL$, further satisfying \eqref{eq:ExtraKaplanIntbyParts} in Theorem \ref{thm:MainNonExistenceKaplan}.

More precisely, there exists $\lambda_0  > 0$, independent of $\e$, such that
\begin{equation} \label{eq:Prop3strongerform}
\begin{gathered}
\text{$-\LL\kappa_\e+\lambda\kappa_\e\geq 0$ pointwise in $\R^N$}	 \\	
\text{for every $\lambda\geq \e^{1/s}\lambda_0$}
\end{gathered}	
\end{equation}
\end{theorem}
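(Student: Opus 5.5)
The plan is to reduce everything to the fixed profile $\Psi_\beta$ by scaling. Since
$$\kappa_\e(x)=\frac{\e^{N/2}}{\mathbf{c}_\beta}\,\Psi_\beta(\sqrt{\e}\,x),$$
the change of variables $y=\sqrt\e\,x$ gives $\|\kappa_\e\|_{L^1(\R^N)}=\|\Psi_\beta\|_{L^1(\R^N)}/\mathbf{c}_\beta=1$.

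\textbf{Step 1: properties (1)--(2) and \eqref{eq:ExtraKaplanIntbyParts}.} Clearly $\kappa_\e\in C^\infty(\R^N)$ and $\kappa_\e>0$. For $\e\in(0,1]$ we have $1+\e|x|^2\geq \e(1+|x|^2)$, so
$$\kappa_\e\leq \mathbf{c}_\beta^{-1}\e^{N/2-\beta}\,\Psi_\beta .$$
The gradient is $\nabla\kappa_\e=-2\beta\e\,x\,(1+\e|x|^2)^{-1}\kappa_\e$. Using $\e|x|\leq \sqrt\e\,(1+\e|x|^2)^{1/2}$, this gives $|\nabla\kappa_\e|\leq 2\beta\sqrt\e\,\kappa_\e$. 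Hence \eqref{eq:ExtraKaplanIntbyParts} holds with a constant $c=c(\e)$; that constant may depend on $\e$, which is harmless. Remark \ref{rem:OnEstimatekappa} then gives $\kappa_\e,|\nabla\kappa_\e|\in L^1(\R^N)$, since $\beta>N/2$.

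\textbf{Step 2: scaling of the operator.} By the chain rule and the homogeneity of the fractional Laplacian under dilations,
$$\Delta\kappa_\e(x)=\frac{\e^{N/2}}{\mathbf{c}_\beta}\,\e\,(\Delta\Psi_\beta)(\sqrt\e x),\qquad (-\Delta)^s\kappa_\e(x)=\frac{\e^{N/2}}{\mathbf{c}_\beta}\,\e^{s}\,\big((-\Delta)^s\Psi_\beta\big)(\sqrt\e x).$$
The condition $-\LL\kappa_\e+\lambda\kappa_\e\geq0$ is therefore equivalent to
$$\mathbf{a}\,\e\,\Delta\Psi_\beta-\mathbf{b}\,\e^s(-\Delta)^s\Psi_\beta+\lambda\Psi_\beta\geq 0\quad\text{in }\R^N.$$
So it suffices to prove two one-sided bounds with constants independent of $\e$:
$$-\Delta\Psi_\beta\leq C_1\Psi_\beta\qquad\text{and}\qquad (-\Delta)^s\Psi_\beta\leq C_2\Psi_\beta .$$
The local bound is explicit. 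With $r=|x|$,
$$\Delta\Psi_\beta=\Psi_\beta\Big(-\frac{2\beta N}{1+r^2}+\frac{4\beta(\beta+1)r^2}{(1+r^2)^2}\Big),$$
and the bracket is bounded, so one may take $C_1=2\beta N$.

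\textbf{Step 3: the nonlocal bound (main obstacle).} Proposition \ref{prop:welldefDeltas} makes $(-\Delta)^s\Psi_\beta$ a continuous function, so on $\overline{B}_2$ the bound holds trivially because $\Psi_\beta\geq 3^{-\beta}$ there. For $|x|\geq2$ I would use the symmetric second-difference form and split the integral in $z$.
\begin{itemize}
\item On $\{|z|<|x|/2\}$, every point of the segment $[x-z,x+z]$ has modulus at least $|x|/2$. Since $|D^2\Psi_\beta(w)|\lesssim |w|^{-2\beta-2}$, the second difference is $\lesssim |x|^{-2\beta-2}|z|^2$. Integrating against $|z|^{-N-2s}$ over this region gives $\lesssim |x|^{-2\beta-2s}\lesssim\Psi_\beta(x)$.
\item On $\{|z|\geq|x|/2\}$, I only need an upper bound, so the negative terms $-\Psi_\beta(x\pm z)$ can be discarded. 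The remainder is at most $2\Psi_\beta(x)\int_{|z|\geq|x|/2}|z|^{-N-2s}\,dz\lesssim\Psi_\beta(x)\,|x|^{-2s}$.
\end{itemize}
Both pieces give $(-\Delta)^s\Psi_\beta\leq C_2\Psi_\beta$. The key point is that only the positive part needs control: the genuinely nonlocal tail, which decays like $|x|^{-N-2s}$ and may exceed $\Psi_\beta$ when $2\beta<N+2s$, enters with the favourable (negative) sign. The delicate parts are making the Taylor bound uniform on the near region and checking the hypotheses of Proposition \ref{prop:welldefDeltas}.

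\textbf{Step 4: conclusion and the admissible range of $\lambda$.} Combining Steps 2 and 3, the inequality holds whenever $\lambda\geq \mathbf{a}C_1\e+\mathbf{b}C_2\e^s$. Since $\e\leq1$ and $s<1$, we have $\e\leq\e^s$, so this holds for all $\lambda\geq\lambda_0\e^s$ with $\lambda_0=\mathbf{a}C_1+\mathbf{b}C_2$, which is independent of $\e$.

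This is where I would reconcile the exponent with \eqref{eq:Prop3strongerform}. Under this scaling, the sharp requirement is proportional to $\e^{s}$ (times $\e$ for the local part): at $x=0$ the profile has a strict maximum, so $(-\Delta)^s\Psi_\beta(0)>0$. For $\e<1$ one has $\e^{1/s}<\e^s$, so a threshold of $\e^{1/s}\lambda_0$ would be strictly stronger than what the scaling delivers. I would therefore expect the statement to read $\e^{s}\lambda_0$, or to need a different reparametrisation of $\e$. I would double-check this against the intended normalisation before finalising.
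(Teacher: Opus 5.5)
Your argument is correct, and your reading of the exponent matches what the paper actually proves. Its scaling step ends with $\lambda_0(\e)\le\e^s\lambda_0$, and Theorem~\ref{thm:NonexistenceExplicit} is applied with $\lambda=\e^s\lambda_0$, so the $\e^{1/s}$ in the statement is a misprint. Your computation at $x=0$ shows that the version with $\e^{1/s}$ is in fact false for small $\e$, since $\mathbf b>0$.

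The genuine difference is in the nonlocal estimate. The paper treats $\e=1$ by importing from Ferrari--Verbitsky an explicit hypergeometric formula for $(-\Delta)^s\kappa_1$. After a Pfaff transformation, the asymptotics of $\mathcal H(-s,\beta+s,N/2,z)$ as $z\to1^-$ give $-(-\Delta)^s\kappa_1\asymp(1+|x|^2)^{-N/2-s}>0$ outside a ball $B_{R_0}$; this is where $\beta>N/2$ is used. The nonlocal term is then simply discarded there. On $B_{R_0}$ the paper uses the global bounds $|\Delta\kappa_1|\le\mathcal A$ and $|(-\Delta)^s\kappa_1|\le\mathcal B$, and the $\e$-dependence is read off from the explicit form of $\lambda_0$ in $\mathbf a,\mathbf b$.

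Your near/far splitting instead yields the one-sided relative bound $(-\Delta)^s\Psi_\beta\le C_2\Psi_\beta$ on all of $\R^N$. It is elementary and self-contained, holds for every $\beta>0$, needs no case distinction after rescaling, and gives $\lambda_0=\mathbf aC_1+\mathbf bC_2$ explicitly. The paper's route gives more, namely the sign and exact decay of $(-\Delta)^s\kappa_1$ at infinity, but that extra information is not needed here.

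Your Step~1 is also the right one: the sharp constant in $\kappa_\e\le c\,\Psi_\beta$ is $\e^{N/2-\beta}/\mathbf c_\beta$. The paper's last inequality, $c\,\e^{N/2}\Psi_\beta(\e^{1/2}x)\le c\,\e^{(N-1)/2}\Psi_\beta(x)$, fails for large $|x|$ because $\beta>1/2$.

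There are two harmless slips. On $\overline B_2$ one only has $\Psi_\beta\ge 5^{-\beta}$, not $3^{-\beta}$. And the $|x|^{-N-2s}$ tail dominates $\Psi_\beta$ when $2\beta>N+2s$, not when $2\beta<N+2s$. That is precisely the regime where a two-sided bound $|(-\Delta)^s\Psi_\beta|\lesssim\Psi_\beta$ fails, so your one-sided bound is essential there.
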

\begin{proof}
	We first prove the theorem when $\e = 1$. To begin with, we observe that
	$$\kappa_1(x) = \frac{1}{\mathbf{c}_\beta}\cdot\frac{1}{(1+|x|^2)^\beta} = \mathbf{c}_\beta^{-1}\Psi_\beta(x)$$
	is  smooth and strictly positive in $\mathbb{R}^N$ (hence,
	property (1) in Definition \ref{def:KaplanFunction} is fulfilled); more\-over, for every $x\in\mathbb{R}^N$ we have
	\begin{align*}
	|\nabla\kappa_1|(x) & = \frac{1}{\mathbf{c}_\beta}
	\cdot\frac{2\beta|x|}{(1+|x|^2)^{\beta+1}}
	\leq \frac{2\beta}{\mathbf{c}_\beta}\cdot\frac{1}{(1+|x|^2)^\beta} =
	\frac{2\beta}{\mathbf{c}_\beta}\Psi_\beta(x),
	\end{align*}
and thus $\kappa_1$ satisfies assumption \eqref{eq:ExtraKaplanIntbyParts} (with
$c = (2\beta+1)/\mathbf{c}_\beta$). In particular, since $\beta > N/2$, and by
the  definition of $\mathbf{c}_\beta$, we deduce that $\kappa_1$ satisfies also
property (2)
in Definition \ref{def:KaplanFunction}.

Hence, it remains to verify
 property (3).
To this end, we proceed by separately estimating the local part $\Delta\kappa_1$ and the nonlocal part $-(-\Delta)^s\kappa_1$ of $-\LL\kappa_1$.
\medskip

\noindent -\,\,\emph{Estimate of the local part.} For every $x\in\mathbb{R}^N$, we have
$$\Delta\kappa_1(x) = \frac{2\beta}{\mathbf{c}_\beta(1+|x|^2)^{\beta+2}}\big[(2\beta-N+2)|x|^2-N\big];$$
from this, recalling that $\beta > N/2$, we derive the following facts:
\begin{align}
\mathrm{i)}\,\,& \Delta\kappa_1(x)\geq -\frac{2\beta N}{\mathbf{c}_\beta(1+|x|^2)^{\beta+2}}\quad\forall\,\,x\in\mathbb{R}^N; \label{eq:Deltakappageq}\\[0.1cm]
 \mathrm{ii)}\,\,&\text{there exists $\mathcal{A} > 0$ such that $|\Delta\kappa_1|\leq \mathcal{A}$ pointwise on $\R^N$}.	\label{eq:DeltakappaBd}
\end{align}

\noindent-\,\,\emph{Estimate of the nonlocal part.}
We begin by recalling that, by the computations carried out in the proof of \cite[Corollary 4.1]{FerrVerb}, one can derive an explicit expression for $-(-\Delta)^s\kappa_1$. More precisely, there exists a constant $\vartheta > 0$ such that, for every $x\in\mathbb{R}^N$, it holds
\begin{align*}
-(-\Delta)^s\kappa_1(x)
&= -\frac{\vartheta}{\mathbf{c}_\beta}\cdot\mathcal{H}\big(N/2+s,\beta+s,N/2,-|x|^2\big) \\
&= -\frac{\vartheta}{\mathbf{c}_\beta(1+|x|^2)^{\beta+s}}\cdot
\mathcal{H}\Big(-s,\beta+s,N/2,\frac{|x|^2}{1+|x|^2}\Big),
\end{align*}
where $\mathcal{H}$ denotes the hypergeometric function, and we have used the associated Pfaff transformation in the last identity.
Moreover, since $\beta > {N}/{2}$, we have
$$
\lim_{z\to 1^-}\frac{\mathcal{H}\big(-s,\beta+s,N/2,z\big)}{(1-z)^{N/2-\beta}} = \gamma < 0,
$$
for an explicit constant $\gamma$ depending on $N$, $\beta$ and $s$ (see, e.g., \cite[Chapters 15.2, 15.4]{DLMF}).
As a consequence, there exist $R_0 \geq 1$ and two positive constants $\eta_1,\,\eta_2$ such that
\begin{equation} \label{eq:Deltaskappageq}
\begin{gathered}
0 < \frac{\eta_1}{(1+|x|^2)^{N/2+s}}
\leq -(-\Delta)^s\kappa_1(x)
\leq \frac{\eta_2}{(1+|x|^2)^{N/2+s}} \\[0.1cm]
\text{for every $x\in\R^N$ with $|x| > R_0$}.
\end{gathered}
\end{equation}
In particular, we deduce that $-(-\Delta)^s\kappa_1(x)\to 0$ as $|x|\to +\infty$, and thus, since $-(-\Delta)^s\kappa_1$ is continuous on $\R^N$ (see Proposition \ref{prop:welldefDeltas}), it follows that
\begin{equation} \label{eq:DeltaskappaBd}
\text{there exists $\mathcal{B} > 0$ such that $|(-\Delta)^s\kappa_1|\leq \mathcal{B}$ pointwise on $\R^N$}.
\end{equation}
Gathering all the above estimates, we can easily complete the verification of property (3).

Indeed, by combining \eqref{eq:Deltakappageq} and \eqref{eq:Deltaskappageq},
for every $x\in\R^N$ with $|x| > R_0$ we have
\begin{align*}
-\LL\kappa_1(x)+\lambda\kappa_1(x) & = \mathbf{a}\Delta\kappa_1(x)-\mathbf{b}(-\Delta)^s\kappa_1(x)+\lambda\kappa_1(x) \\
& \geq -\frac{2\mathbf{a}\beta N}{\mathbf{c}_\beta(1+|x|^2)^{\beta+2}}	+\frac{\lambda}{\mathbf{c}_\beta(1+|x|^2)^\beta} \\
& = \frac{1}{\mathbf{c}_\beta(1+|x|^2)^{\beta}}\big[\lambda-2\mathbf{a}\beta N(1+|x|^2)^{-2}\big] \\
& \geq \frac{1}{\mathbf{c}_\beta(1+|x|^2)^{\beta}}\big[\lambda-2\mathbf{a}\beta N(1+R_0^2)^{-2}\big] \geq 0,
\end{align*}
provided that
$$\lambda \geq \lambda_1 = 2\mathbf{a}\beta N(1+R_0^2)^{-2}>0.$$
On the other hand, for every $x\in\R^N$ with $|x|\leq R_0$, by
\eqref{eq:DeltakappaBd} and \eqref{eq:DeltaskappaBd} we have
\begin{align*}
	-\LL\kappa_1(x)+\lambda\kappa_1(x) & = \mathbf{a}\Delta\kappa_1(x)-\mathbf{b}(-\Delta)^s\kappa_1(x)+\lambda\kappa_1(x) \\
	&\geq -\mathbf{a}\cdot\mathcal{A}-\mathbf{b}\cdot\mathcal{B}+\frac{\lambda}{\mathbf{c}_\beta(1+|x|^2)^\beta} \\
	& \geq -\mathbf{a}\cdot\mathcal{A}-\mathbf{b}\cdot\mathcal{B}+\frac{\lambda}{\mathbf{c}_\beta(1+R_0^2)^\beta}  > 0,
\end{align*}
provided that
$$\lambda\geq \lambda_2 = \mathbf{c}_\beta(1+R_0^2)^\beta\cdot\big(\mathbf{a}\cdot\mathcal{A}+\mathbf{b}\cdot\mathcal{B}\big)> 0.$$
Summing up, we conclude that $\kappa_1$ fulfills property (3)
\emph{for every $\lambda\geq \lambda_0$}, where
\begin{equation} \label{eq:deflambdazero}
\lambda_0 =  \max\{\lambda_1,\lambda_2\}> 0.	
\end{equation}
In other words, $\kappa_1$ satisfies \eqref{eq:Prop3strongerform} (with the above choice of $\lambda_0$).
\vspace{0.1cm}

Having completed the proof in the case $\varepsilon = 1$, we now consider the general case $0<\varepsilon<1$, exploiting the scaling properties of $\Delta$ and $(-\Delta)^s$.

To this end, we make the following key observation: by definition,
\begin{equation} \label{eq:kappekappa1}
\kappa_\varepsilon(x) = \varepsilon^{N/2}\kappa_1(\varepsilon^{1/2}x)
\qquad \forall\, x\in\mathbb{R}^N.
\end{equation}
In view of \eqref{eq:kappekappa1}, it follows immediately that $\kappa_\varepsilon$ is smooth and strictly positive in $\mathbb{R}^N$, since the same properties hold for $\kappa_1$. Hence, $\kappa_\varepsilon$ satisfies property (1) in Definition \ref{def:KaplanFunction}.

Moreover, since $0<\varepsilon<1$, for every $x\in\mathbb{R}^N$ we have
\begin{align*} \kappa_\e(x)+|\nabla \kappa_\e(x)| & = \e^{N/2}\big(\kappa_1(\e^{1/2}x)+\e^{1/2}|(\nabla \kappa_1)(\e^{1/2}x)|\big) \\ & \leq \e^{N/2}\big(\kappa_1(\e^{1/2}x)+|(\nabla \kappa_1)(\e^{1/2}x)|\big) \\ & (\text{since $\kappa_1$ fulfills \eqref{eq:ExtraKaplanIntbyParts}}) \\ & \leq c\,\e^{N/2}\Psi_\beta(\e^{1/2}x) \leq c\,\e^{(N-1)/2}\Psi_\beta(x), \end{align*}As a consequence, $\kappa_\varepsilon$ satisfies \eqref{eq:ExtraKaplanIntbyParts}, and therefore also property (2) in Definition \ref{def:KaplanFunction} (see \eqref{eq:kappekappa1} and recall that $\|\kappa_1\|_{L^1(\mathbb{R}^N)}=1$ by the choice of $\mathbf{c}_\beta$).

In order to verify property (3), we exploit the scaling properties of $\Delta$ and $(-\Delta)^s$. Indeed, taking into account \eqref{eq:kappekappa1},
 for every $x\in\RN$ we have
\begin{align*}
	& -\LL\kappa_\e(x)+\lambda\kappa_\e(x) = \mathbf{a}\Delta\kappa_\e(x)-
	\mathbf{b}(-\Delta)^s\kappa_\e(x)+\lambda\kappa_\e(x) \\
	& \qquad = (\e\mathbf{a})(\Delta\kappa_1)(\e^{1/2}x)-(\e^{s}\mathbf{b})[(-\Delta)^s\kappa_1](\e^{1/2}x)+\lambda\kappa_1(\e^{1/2}x);
\end{align*}
from this, since we have already demonstrated that $-\LL\kappa_1+\lambda\kappa_1\geq 0$ on $\RN$ for every $\lambda\geq \lambda_0$, and since we have an \emph{explicit} dependence of $\lambda_0$ with respect to the coefficients
$\mathbf{a},\mathbf{b}$, we derive that
$-\LL\kappa_\e+\lambda\kappa_\e\geq 0$ pointwise on $\R^N$, provided that
$$\lambda\geq \lambda_0(\e) = \max\big\{2(\e \mathbf{a})\beta N(1+R_0^2)^{-2},\mathbf{c}_\beta(1+R_0^2)^\beta\cdot\big((\e \mathbf{a})\cdot\mathcal{A}+(\e^s\mathbf{b})\cdot\mathcal{B}\big)\big\}.$$
On the other hand, recalling that $0<\e < 1$ (hence, $\e\leq \e^s$), we have
$$\lambda_0(\e)\leq \e^s\lambda_0,$$
from which we infer that $\kappa_\e$ verify property (3) in the stronger form
\eqref{eq:Prop3strongerform} (with $\lambda_0$ as in \eqref{eq:deflambdazero}). This completes the demonstration.
\end{proof}
\begin{remark}
	We stress that the previous Theorem \ref{thm:ExplicitKaplan} is the only result which does not hold in this form if $\mathbf{a}>0$ and $\mathbf{b}=0$ (i.e. in the purely local case). It is known indeed (see, e.g., \cite{BL, deP} that in this case the right Kaplan function is of the form 
	\begin{equation*}
		\left(\dfrac{k}{\pi}\right)^{N/2} \mathbf{e}^{-k|x|^2},
	\end{equation*}
	\noindent for some $k>0$.
\end{remark}	
By combining Theorems \ref{thm:MainNonExistenceKaplan}-\ref{thm:ExplicitKaplan},
we obtain the following `more explicit' result.
\begin{theorem} \label{thm:NonexistenceExplicit}
	Let $u_0\in C_+(\mathbb{R}^N)\cap L^\infty(\mathbb{R}^N)$, and assume that there exist $\beta > N/2,\,\e\in(0,1]$ such that the following
	estimate holds
	\begin{equation} \label{eq:conditionKaplanExplicit}
		\frac{\e^{N/2}}{\mathbf{c}_\beta}\int_{\R^N}\frac{u_0(x)}{(1+\e|x|^2)^\beta}\,dx >\mathbf{s}_f(\e^s\lambda_0)
	\end{equation}
	\emph{(}where $\lambda_0 > 0$ as in \eqref{eq:deflambdazero}, and
	$\mathbf{c}_\beta = \|\Psi_\beta\|_{L^1(\R^N)}$\emph{)}. Then, the Cauchy problem \eqref{eq:mainPbParabolicIntro}, with initial datum $u_0$, does not admit global classical solution.
\end{theorem}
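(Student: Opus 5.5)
The plan is to combine Theorem \ref{thm:ExplicitKaplan} with Theorem \ref{thm:MainNonExistenceKaplan}, choosing $\lambda = \e^s\lambda_0$. Fix $\beta>N/2$ and $\e\in(0,1]$ as in \eqref{eq:conditionKaplanExplicit}, and let $\kappa_\e$ be the function of Theorem \ref{thm:ExplicitKaplan}. That theorem gives three facts: $\kappa_\e$ is a Kaplan function for $\LL$, it satisfies \eqref{eq:ExtraKaplanIntbyParts}, and $-\LL\kappa_\e+\lambda\kappa_\e\ge 0$ for every $\lambda\ge \e^{1/s}\lambda_0$. Its proof actually establishes the range $\lambda\ge\lambda_0(\e)$ with $\lambda_0(\e)\le \e^s\lambda_0$. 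The statement \eqref{eq:Prop3strongerform} writes the threshold as $\e^{1/s}\lambda_0$; since $\e\le 1$ and $1/s>s$, we have $\e^{1/s}\le\e^s$, so in either reading the value $\lambda:=\e^s\lambda_0$ is admissible. I would point this out explicitly and use the bound $\lambda_0(\e)\le\e^s\lambda_0$ proved there.

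With this choice, the hypothesis \eqref{eq:conditionKaplanExplicit} says exactly that
\[
\int_{\R^N}\kappa_\e u_0\,dx>\mathbf{s}_f(\lambda),
\]
which is \eqref{eq:KaplanConditionuzero}. Theorem \ref{thm:MainNonExistenceKaplan} then rules out global classical solutions satisfying \eqref{e2}.

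The main obstacle is that Theorem \ref{thm:NonexistenceExplicit} claims nonexistence of \emph{any} global classical solution, whereas Theorem \ref{thm:MainNonExistenceKaplan} needs the extra condition \eqref{e2}, $\kappa_\e u_t(\cdot,t)\in L^1$. I would handle this by rerunning the argument of Theorem \ref{thm:MainNonExistenceKaplan} in integrated form, so that \eqref{e2} is never needed. Multiply the equation by $\chi_R\kappa_\e$ and integrate over $\R^N\times(t_1,t_2)$. For fixed $R$ the integrand is compactly supported and $u_t$ is continuous, so Fubini gives
\[
\int\chi_R\kappa_\e u(t_2)-\int\chi_R\kappa_\e u(t_1).
\]
The remaining terms are treated exactly as before: the commutator terms go to zero, and the Kaplan inequality gives the lower bound. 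Letting $R\to\infty$ then yields
\[
\Phi(t_2)-\Phi(t_1)\ge\int_{t_1}^{t_2}\bigl(-\lambda\Phi+f(\Phi)\bigr)\,d\tau .
\]
This limit is justified by dominated convergence, using $u\in L^\infty(\R^N\times[0,\tau])$ and $\kappa_\e\in L^1$. Jensen's inequality is applied as before. $\Phi$ is continuous on $[0,\infty)$ by continuity and boundedness of $u$ and dominated convergence. The integral inequality is enough for the comparison argument of \cite[Lemma 5.3]{PZ1}.

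It remains to run the ODE analysis. Since $\Phi(0)>\mathbf{s}_f(\lambda)$, convexity of $f$ with $f(0)=0$ makes $f(z)/z$ nondecreasing, so $f(z)>\lambda z$ for $z>\mathbf{s}_f(\lambda)$. Hence $\Phi$ stays above $\Phi(0)$ and increases. By convexity again, $f(z)-\lambda z\ge \delta f(z)$ for $z\ge\Phi(0)$ with some $\delta>0$. Then $(H)_f$-iii) gives $\int^\infty dz/(f(z)-\lambda z)<\infty$, so $\Phi$ blows up in finite time. This contradicts the finiteness of $\Phi(t)\le\|u(t)\|_\infty$ for a global solution.
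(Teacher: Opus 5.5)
Your first step is exactly the paper's proof. You take $\kappa=\kappa_\e$ from Theorem \ref{thm:ExplicitKaplan}, choose $\lambda=\e^s\lambda_0$, note that \eqref{eq:conditionKaplanExplicit} is \eqref{eq:KaplanConditionuzero}, and invoke Theorem \ref{thm:MainNonExistenceKaplan}. The paper stops there. Read literally, its argument therefore only excludes global classical solutions satisfying \eqref{e2}, and the statement of Theorem \ref{thm:NonexistenceExplicit} drops that restriction without comment. You are right on both points you flag:
\begin{itemize}
\item the threshold in \eqref{eq:Prop3strongerform} should read $\e^s\lambda_0$, which is what the proof gives; either way $\lambda=\e^s\lambda_0$ is admissible because $\e^{1/s}\le\e^s$;
\item an extra argument is needed to reach the statement as written.
\end{itemize}

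Your repair is a genuinely more careful route than the paper's: you integrate the localized identity over $(t_1,t_2)$ for fixed $R$, and only then let $R\to+\infty$. This never uses $\kappa_\e u_t(\cdot,t)\in L^1(\R^N)$. Nor does it use the differentiation of $\Phi$ under the integral sign that the paper takes for granted when it asserts $\Phi\in C^1$ with $\Phi'=\int\kappa u_t$. You work with the integral inequality directly. Your comparison argument for it replaces the citation of \cite[Lemma 5.3]{PZ1}; it rests on the monotonicity of $f(z)/z$, the bound $f(z)-\lambda z\ge\delta f(z)$ for $z\ge\Phi(0)$, and $(H)_f$-iii).

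The price is two technical points you should state explicitly.
\begin{itemize}
\item The Fubini step for fixed $R$ needs $u_t$ (equivalently $\LL u$) to be integrable on $\overline{B}_{2R}\times[t_1,t_2]$, e.g.\ jointly continuous. Definition \ref{defsole} only asks for separate regularity in $C^{2,1}_{x,t}$, so this is a mild extra hypothesis. The paper tacitly needs more than this to differentiate $\Phi$.
\item Passing $R\to+\infty$ inside $\int_{t_1}^{t_2}$ requires the commutator bounds behind \cite[Lemma 3.2]{BMP} to be uniform in $\tau\in[t_1,t_2]$. This holds because they involve $u$ only through $\sup_{\tau\le t_2}\|u(\cdot,\tau)\|_{L^\infty(\R^N)}$, which is finite by Definition \ref{defsole}-(2).
\end{itemize}
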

\begin{proof}
	On account of Theorem \ref{thm:ExplicitKaplan}, we know that the function
	$$\kappa_\e(x) = \frac{\e^{N/2}}{\mathbf{c}_\beta}\frac{1}{(1+\e|x|^2)^\beta}$$
	is a Kaplan function for  $\LL$ (since $\beta > N/2$ and $\e\in(0,1]$), and it also satisfies \eqref{eq:ExtraKaplanIntbyParts}. Moreover, it fulfills property (3) in De\-fi\-nition \ref{def:KaplanFunction} \emph{for every  $\lambda \geq \e^s\lambda_0$} (with $\lambda_0 > 0$
	as in \eqref{eq:deflambdazero}).
	
	The conclusion then follows directly
	from Theorem \ref{thm:MainNonExistenceKaplan} by choosing
	$$\kappa = \kappa_\e \quad\text{and}\quad \lambda = \e^s\lambda_0,$$
    since in this case condition \eqref{eq:KaplanConditionuzero} reduces
    to \eqref{eq:conditionKaplanExplicit}. This ends the proof.
	\end{proof}
We conclude this section by showing how Theorem \ref{thm:NonexistenceExplicit} can be used to recover the non-exi\-stence of global classical solutions to \eqref{eq:mainPbParabolicIntro}, for every non-zero initial datum, in the case
$$
f(z) = z^p, \qquad 1<p<p_F = 1+\frac{2s}{N}.
$$
Such a result was already established in \cite{BPV} by different methods and for the broader class of very weak solutions (see also ??).

\begin{theorem}\label{teo3}
Let $f(z) = z^p$, with $1<p<p_F$. Then, the Cauchy problem \eqref{eq:mainPbParabolicIntro}
\emph{(}associated with this $f$\emph{)} does not admit global classical solutions \emph{for every non-zero initial datum}
$$u_0\in C_+(\mathbb{R}^N)\cap L^\infty(\mathbb{R}^N).$$
\end{theorem}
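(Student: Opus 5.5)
The strategy is to apply Theorem \ref{thm:NonexistenceExplicit} with $\e\to 0^+$, comparing how the two sides of \eqref{eq:conditionKaplanExplicit} scale in $\e$. By \eqref{eq:sflambdapower}, for $f(z)=z^p$ the right-hand side equals
$$\mathbf{s}_f(\e^s\lambda_0) = (\e^s\lambda_0)^{1/(p-1)} = \lambda_0^{1/(p-1)}\,\e^{s/(p-1)}.$$
For the left-hand side, fix a nonzero $u_0\in C_+(\R^N)\cap L^\infty(\R^N)$ and any $\beta>N/2$. Since $u_0$ is continuous, nonnegative and not identically zero, there are $x_0$, $r>0$ and $\delta>0$ with $u_0\geq\delta$ on $B_r(x_0)$. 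For $\e\in(0,1]$ and $x\in B_r(x_0)$ we have $1+\e|x|^2\leq 1+(|x_0|+r)^2$, so
$$\frac{\e^{N/2}}{\mathbf{c}_\beta}\int_{\R^N}\frac{u_0(x)}{(1+\e|x|^2)^\beta}\,dx \geq \frac{\delta\,|B_r|}{\mathbf{c}_\beta\,(1+(|x_0|+r)^2)^\beta}\,\e^{N/2} =: C_0\,\e^{N/2},$$
where $C_0>0$ does not depend on $\e$.

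It then suffices that $C_0\,\e^{N/2}>\lambda_0^{1/(p-1)}\e^{s/(p-1)}$ for some $\e\in(0,1]$, that is,
$$\e^{\frac{N}{2}-\frac{s}{p-1}} > \lambda_0^{1/(p-1)}/C_0 .$$
Now $p<1+\frac{2s}{N}$ is equivalent to $\frac{s}{p-1}>\frac N2$, so the exponent $\frac N2-\frac{s}{p-1}$ is negative and the left-hand side tends to $+\infty$ as $\e\to0^+$. Hence a small enough $\e\in(0,1]$ satisfies \eqref{eq:conditionKaplanExplicit}, and Theorem \ref{thm:NonexistenceExplicit} gives the nonexistence of global classical solutions. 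Here it matters that $\lambda_0$ in \eqref{eq:deflambdazero} does not depend on $\e$, as Theorem \ref{thm:ExplicitKaplan} asserts.

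The main delicate point is the scaling of the admissible $\lambda$. The statement of Theorem \ref{thm:ExplicitKaplan} allows $\lambda\geq \e^{1/s}\lambda_0$, while its proof and Theorem \ref{thm:NonexistenceExplicit} use $\e^s\lambda_0$. With $\e^s$, the threshold is $\frac{s}{p-1}>\frac N2$, which is exactly $p<p_F$. I will therefore rely on the bound $\lambda_0(\e)\leq\e^s\lambda_0$ established in the proof (using $\e\leq\e^s$), and take Theorem \ref{thm:NonexistenceExplicit} exactly as stated, including whatever integrability requirement on $u_t$ it carries from Theorem \ref{thm:MainNonExistenceKaplan}. The lower bound on the weighted integral is elementary.
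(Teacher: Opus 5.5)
Your proof is correct and follows the paper's argument: you apply Theorem \ref{thm:NonexistenceExplicit} with $\mathbf{s}_f(\varepsilon^s\lambda_0)=\lambda_0^{1/(p-1)}\varepsilon^{s/(p-1)}$ and let $\varepsilon\to0^+$, using that $\frac{s}{p-1}>\frac N2$ exactly when $p<p_F$. You also rightly take the admissible threshold to be $\varepsilon^s\lambda_0$, as the proof of Theorem \ref{thm:ExplicitKaplan} actually gives. The only cosmetic difference is in the lower bound: you bound $\int_{\R^N}u_0(1+\varepsilon|x|^2)^{-\beta}\,dx$ from below uniformly in $\varepsilon\in(0,1]$ using a ball on which $u_0\geq\delta$, whereas the paper uses monotone convergence to $\int_{\R^N}u_0\,dx\in(0,+\infty]$; both give the needed $\varepsilon$-independent positive lower bound.
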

\begin{proof}
	Let $u_0\in C_+(\mathbb{R}^N)\cap L^\infty(\mathbb{R}^N),\,u_0\not\equiv 0$. By Theorem \ref{thm:NonexistenceExplicit}, it suffices to prove that there exist $\beta > N/2$ and $\varepsilon\in(0,1]$ such that\begin{equation}\label{eq:toProveSubcritical}
\begin{split}
& \frac{\e^{N/2}}{\mathbf{c}_\beta}\int_{\R^N}\frac{u_0(x)}{(1+\e|x|^2)^\beta}\,dx >\mathbf{s}_f(\e^s \lambda_0)\\
&\qquad\qquad\Longleftrightarrow\,\,
	\int_{\R^N}\frac{u_0(x)}{(1+\e|x|^2)^\beta}\,dx >\mathbf{c}_\beta \lambda_0^{1/(p-1)}\e^{s/(p-1)-N/2},
\end{split}	
\end{equation}
where we have used  that $\mathbf{s}_f(\lambda) = \lambda^{1/(p-1)}$ when $f(z) = z^p$ (see \eqref{eq:sflambdapower}).
\vspace{0.1cm}

Fix now $\beta > N/2$. On the one hand, since $u_0\geq 0$ and $u_0\not\equiv 0$, by the Monotone Convergence Theorem we have the following computation
$$\lim_{\e\to 0^+}\int_{\R^N}\frac{u_0(x)}{(1+\e|x|^2)^\beta}\,dx =
\int_{\RN}u_0(x)\,dx > 0;$$
on the other hand, since $1<p<p_F$, we have
$$\lim_{\e\to 0^+}\e^{s/(p-1)-N/2} = 0.$$
As a consequence, since both $\mathbf{c}_\beta$ and $\lambda_0$ are  independent of $\e$, there exists $0<\e^*<1$ such that
condition \eqref{eq:toProveSubcritical} is satisfied. This ends the proof.
\end{proof}

\end{document}